\documentclass[11pt]{article}
\usepackage[a4paper,margin=1in]{geometry}
\usepackage{amsmath,amssymb,amsthm,mathtools}
\usepackage{microtype}
\usepackage[round,authoryear]{natbib}
\usepackage{authblk}
\makeatletter
\renewcommand\AB@authnote[1]{{\textsuperscript{\normalfont#1}}}
\renewcommand\AB@affilnote[1]{{\textsuperscript{\normalfont#1}}}
\makeatother

\usepackage[hidelinks]{hyperref}

\newtheorem{theorem}{Theorem}[section]
\newtheorem{lemma}[theorem]{Lemma}

\newtheorem{corollary}[theorem]{Corollary}
\theoremstyle{remark}

\newcommand{\1}{\mathbf 1}

\newcommand{\diag}{\operatorname{diag}}

\newcommand{\Span}{\operatorname{span}}

\newcommand{\ind}{\mathbb{I}}

\newcommand{\ResA}{\mathcal A}
\newcommand{\ResB}{\mathcal B}
\newcommand{\Srect}{\mathcal S}
\allowdisplaybreaks[1]
\title{Sharp Normalized Covariance Bounds and\\
Constant-Stretch Correlated Sampling on the Hypersimplex}
\author[1]{Tommaso Cesari}
\author[2]{Roberto Colomboni}

\affil[1]{School of Electrical Engineering and Computer Science, University of Ottawa, Ottawa, Canada}
\affil[2]{School of Mathematics, University of Bristol, Bristol, United Kingdom}

\affil[ ]{\footnotesize
\texttt{tcesari@uottawa.ca},
\texttt{roberto.colomboni@bristol.ac.uk}
}
\date{}

\hypersetup{
 pdftitle={Sharp Normalized Covariance Bounds and Constant-Stretch Correlated Sampling on the Hypersimplex},
 pdfauthor={Tommaso Cesari, Roberto Colomboni}
}

\begin{document}
\maketitle

\begin{abstract}
We establish the normalized covariance bound conjectured by \citet[Conjecture~3]{anarihaqima2026} for fixed-rank
external-field measures.
Let $d\ge2$ be an integer and let $m\in\{1,\ldots,d-1\}$.
Let $w\in(0,+\infty)^d$, and let $\mathsf S$ be an $m$-element
random subset of $[d]\coloneqq \{1,\ldots,d\}$ distributed according to the
rank-$m$ external-field measure with weights $w$, i.e.,
\[
 \mathbb P(\mathsf S=S)=\frac{\prod_{i\in S}w_i}{e_m(w)},
 \qquad S\subseteq[d],\quad |S|=m,
\]
where
\[
 e_m(w)\coloneqq \sum_{\substack{T\subseteq[d]\\ |T|=m}}\prod_{\ell\in T}w_\ell
\]
is the $m$th elementary symmetric polynomial in $w_1,\ldots,w_d$.
Let $X\coloneqq (X_1,\ldots,X_d)^{\top}$ be its indicator vector, i.e.,
\[
 X_i=\ind\{i\in\mathsf S\},\qquad i\in\{1,\ldots,d\}.
\]
Let $\Sigma\coloneqq \operatorname{Cov}(X)$, put $v_i\coloneqq \Sigma_{ii}$ for each
$i\in[d]$, and define
\[
 v\coloneqq (v_1,\ldots,v_d)^{\top},\qquad
 D\coloneqq \operatorname{diag}(v),\qquad V\coloneqq \sum_{i=1}^d v_i.
\]
Our main result is
\[
 \Sigma\succeq D-\frac{vv^{\top}}{V}.
\]
This improves the coefficient $1/2$ established in the first version of our work \citep[Corollary~1.3]{cesaricolomboni2026} to the optimal universal coefficient $1$.
As a first corollary, we improve the coefficient in the pseudoinverse bound of \citet[Lemma~2]{bacchiocchicesaricolomboni2026} from $2$ to the optimal value $1$. Specializing this bound to coordinate differences gives an alternative proof of our effective-resistance theorem from the first version \citep[Theorem~1.1]{cesaricolomboni2026}.
As a further corollary, the framework of \citet[Theorem~1 and Corollary~2]{anarihaqima2026} yields unconditional correlated-sampling guarantees with stretch $6$ on the hypersimplex and $12$ on its at-most variant.
Unconditional constant-stretch guarantees were first established in the first version of our work \citep[Corollaries~1.4 and~1.5]{cesaricolomboni2026}, with constants $16$ and $32$, which we improve here to $6$ and $12$.
These improved constants strengthen the positive resolution, established in the first version of our work, of the constant-stretch question posed by \citet[Theorem~2 and Section~5]{naoretal2026}.
\end{abstract}

\section{Introduction}
\label{sec:introduction}

\subsection{Fixed-rank external fields and covariance}

For every positive integer $r$, write $[r]\coloneqq\{1,\ldots,r\}$ and let $\1_r\in\mathbb R^r$ denote the all-ones vector.
For $S\subseteq[r]$, write $\ind_S\coloneqq(\ind\{i\in S\})_{i=1}^r$ for its indicator vector, where $\ind\{A\}$ denotes the indicator of a proposition $A$.

Let $d\ge2$ be an integer and let $m\in [d-1]$.
Fix $w=(w_1,\ldots,w_d)\in(0,+\infty)^d$, and let $\mathsf S$ be an $m$-element random subset of $[d]$ distributed according to the rank-$m$ external-field measure with weights $w$, i.e.,
\begin{equation}
    \mathbb P(\mathsf S=S)
=
    \frac{\prod_{i\in S}w_i}{e_m(w)},
\qquad
    S\subseteq[d],
    \quad
    |S|=m.
\label{eq:intro-law}
\end{equation}
Here
\[
    e_m(w)
\coloneqq
    \sum_{\substack{T\subseteq[d]\\ |T|=m}}\prod_{\ell\in T}w_\ell
\]
is the $m$th elementary symmetric polynomial in $w_1,\ldots,w_d$.
Set $X\coloneqq\ind_{\mathsf S}\in\{0,1\}^d$.
The law \eqref{eq:intro-law} is the conditional Bernoulli law, also known as rejective sampling \citep{Hajek1964}.
More explicitly, if $B_1,\ldots,B_d$ are independent Bernoulli variables with $\mathbb P(B_i=1)=w_i/(1+w_i)$, then $X$ has the law of $(B_1,\ldots,B_d)^\top$ conditional on $\sum_iB_i=m$.
Equivalently, it is the unique maximum-entropy law among all $m$-subset distributions with the same inclusion marginals \citep{ChenDempsterLiu1994}.

Let
\[
    \mu
\coloneqq
    \mathbb{E}[X],
\qquad
    \Sigma
\coloneqq
    \operatorname{Cov}(X),
\qquad
    v_i
\coloneqq
    \Sigma_{ii}
=
    \mu_i(1-\mu_i).
\]
Since all weights are positive and $1\le m\le d-1$, each coordinate has
positive probability of being included and of being excluded.
Thus $0<\mu_i<1$ and $v_i>0$ for every $i\in[d]$.

Write
\[
    v
=
    (v_1,\ldots,v_d)^\top,
\qquad
    D
=
    \diag(v_1,\ldots,v_d),
\qquad
    V
=
    \1_d^\top v.
\]
For symmetric matrices $A$ and $B$, write $A\succeq B$ if $A-B$ is positive semidefinite.
\subsection{A sharp normalized covariance theorem}
\label{sec:intro-covariance}

Our main result is the following sharp normalized covariance bound.

\begin{theorem}[Sharp normalized covariance bound]
\label{thm:main}
For every fixed-rank external-field law \eqref{eq:intro-law} with positive weights,
\begin{equation}
    \Sigma
\succeq
    D-\frac{vv^\top}{V}.
\label{eq:main}
\end{equation}
Equivalently, every nonzero eigenvalue of $D^{-1/2}\Sigma D^{-1/2}$ is at least $1$.
The constant $1$ is optimal uniformly over the stated family of laws.
\end{theorem}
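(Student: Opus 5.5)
The plan is to reduce \eqref{eq:main} to a scalar variance inequality, and then prove it using the explicit second-order structure of the conditional Bernoulli law. Since $|\mathsf S|=m$ almost surely, $\Sigma\1_d=0$. The right-hand side also annihilates $\1_d$, because $(D-vv^\top/V)\1_d=v-v=0$. For $x\in\mathbb R^d$, the inequality reads
\[
 \operatorname{Var}(x^\top X)\;\ge\;\sum_i v_i x_i^2-\frac{(\sum_i v_ix_i)^2}{V}=\min_{c\in\mathbb R}\sum_i v_i(x_i-c)^2 .
\]
Both sides are invariant under $x\mapsto x+c\1_d$. The equivalence with the eigenvalue statement is routine: $D^{-1/2}\Sigma D^{-1/2}$ kills $D^{1/2}\1_d$, and $I-D^{1/2}\1\1^\top D^{1/2}/V$ is the orthogonal projection onto its complement.

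Optimality is a direct computation at equal weights. There $v_i=m(d-m)/d^2\eqqcolon s$ and $\Sigma_{ij}=-s/(d-1)$ for $i\ne j$. Hence $\Sigma=\tfrac{d}{d-1}s(I-\1\1^\top/d)$ while $D-vv^\top/V=s(I-\1\1^\top/d)$. The nonzero normalized eigenvalues are therefore $d/(d-1)\to1$, so no constant larger than $1$ works uniformly.

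For the inequality itself, I would use the classical identity for rejective sampling, valid when $w_i\ne w_j$ (equal weights are handled by continuity):
\[
 \pi_{ij}\coloneqq\mathbb P(i,j\in\mathsf S)=\frac{w_i\mu_j-w_j\mu_i}{w_i-w_j}.
\]
It follows from $\mu_i=w_i\,e_{m-1}(w_{-i})/e_m(w)$ and the relation $e_{m-1}(w_{-i})-e_{m-1}(w_{-j})=(w_j-w_i)e_{m-2}(w_{-ij})$. This gives $\Sigma_{ij}=\pi_{ij}-\mu_i\mu_j$ for $i\neq j$ as a Loewner-type (divided-difference) matrix. Writing $\mu_i=f(w_i)$ for the function $f(t)=t\,g(t)$ obtained by freezing the other coordinates is not quite available, since each $\mu_i$ depends on all the weights. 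So I would instead parametrize via the exponential tilt. Choose $\lambda$ so that the independent Bernoullis $B_i$ with $p_i=\lambda w_i/(1+\lambda w_i)$ have $\sum p_i=m$, and compare $\Sigma$ to the product covariance $\diag(p_i(1-p_i))$ minus its rank-one projection. This suggests proving the stronger-looking chain
\[
 \Sigma\succeq P-\frac{qq^\top}{\1^\top q}\succeq D-\frac{vv^\top}{V},\qquad P=\diag(q),\quad q_i=p_i(1-p_i),
\]
where the first step is a ``conditioning on the sum does not shrink the variance orthogonal to the sum direction'' statement. The second step needs monotonicity of the map $q\mapsto P-qq^\top/\1^\top q$ in a suitable sense.

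The main obstacle is that first step: a Gaussian would give equality, but for Bernoullis conditioning can shrink variance. I would attack it by homotopy, moving the weights from the equal-weights point (where the bound holds strictly, with ratio $d/(d-1)$) to arbitrary $w$. Along the path I would show that the matrix $\Sigma-D+vv^\top/V$, restricted to $\1^\perp$, never becomes singular. Equivalently, $1$ is never a nonzero eigenvalue of $D^{-1/2}\Sigma D^{-1/2}$, which I would try to rule out using the divided-difference formula for $\pi_{ij}$ together with a Cauchy-interlacing argument obtained by deleting one coordinate. That deletion produces a rank-$m$ or rank-$(m-1)$ external-field law on $d-1$ points, which opens an induction on $d$. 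The base case $d=2$ is an equality: $\Sigma=v_1\begin{pmatrix}1&-1\\-1&1\end{pmatrix}$ and $D-vv^\top/V=\tfrac{v_1}{2}\begin{pmatrix}1&-1\\-1&1\end{pmatrix}$, since $v_1=v_2$.
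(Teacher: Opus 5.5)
Your reduction to a scalar variance inequality, the equivalence with the eigenvalue formulation, the divided-difference formula for $\pi_{ij}$, and the optimality argument are all correct. For optimality you use the uniform law, whose normalized nonzero eigenvalues are $d/(d-1)\to1$; the paper instead uses $d=3$, $m=1$, probabilities $(\varepsilon,q,q)$ with $\varepsilon\downarrow0$. (A small slip: for $d=2$ the bound is not an equality, since $\Sigma=2(D-vv^\top/V)$.)

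The inequality itself, however, is never proved, and the route you propose cannot work, because both links of your chain are false.
\begin{itemize}
\item \emph{First link.} Take $d=2$, $m=1$, $w=(100,1)$. The tilt is $\lambda=1/10$, so $p=(10/11,1/11)$ and $q_1=q_2=10/121$, while $v_1=v_2=100/101^2$. At $x=(1,-1)$ you get $x^\top\Sigma x=400/101^2\approx0.039$ but $x^\top(P-qq^\top/\1^\top q)x=20/121\approx0.165$. As $w_1/w_2\to\infty$ this ratio tends to $0$. So conditioning on the sum can shrink variance orthogonal to $\1$ by an unbounded factor, and no comparison with the tilted product covariance is available.
\item \emph{Second link.} Take $d=3$, $m=1$, $w=(1,1,1/10)$. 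Then $\mu_1=\mu_2=1/2.1$ gives $v_1=v_2\approx0.2494$, whereas the tilt gives $p_1=p_2\approx0.4607$ and $q_1=q_2\approx0.2485$. At $x=(1,-1,0)$ the middle form $2q_1$ lies below $2v_1$. The map $q\mapsto P-qq^\top/\1^\top q$ is indeed monotone, but $q\ge v$ fails coordinatewise.
\end{itemize}

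The homotopy does not repair this. Continuity from the equal-weight point only reduces the theorem to showing that $1$ is never a nonzero eigenvalue of $N=D^{-1/2}\Sigma D^{-1/2}$, for any $w$. That is the strict form of the theorem itself, which is what the paper actually proves, and you give no mechanism for it.

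Interlacing does not supply one. Deleting a coordinate from $\Sigma$ gives $\operatorname{Cov}(X_{-d})$, which is not a fixed-rank external-field covariance, since its coordinate sum $m-X_d$ is random. The external-field laws you have in mind arise by conditioning on $X_d$. Their covariances are not principal submatrices of $\Sigma$, and they are not normalized by the same variances. Moreover, Cauchy interlacing only yields $\lambda_2(N)\ge\lambda_{\min}(N_{-d})=1-\rho$, where $\rho>0$ (for $d\ge3$) is the Perron root of the nonnegative off-diagonal part of $N_{-d}$. This is strictly below the target $1$.

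The missing idea is an inertia statement. The covariance computation gives $D-\Sigma=e_m(w)^{-2}\diag(w)H\diag(w)$, where $H$ has zero diagonal and $H_{i\ell}=K_{m-1}(w_{-i\ell})$ with $K_j=e_j^2-e_{j-1}e_{j+1}$. One must show that $H$ is negative definite on some hyperplane, so that it has exactly one positive eigenvalue. An elementary lemma then turns this into $D-\Sigma\preceq vv^\top/V$, using $(D-\Sigma)\1=v$.

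The paper proves the inertia for $m\ge2$ through a positive definite certificate (for $m=1$, $H=\1\1^\top-I$ directly):
\begin{itemize}
\item The certificate is $R=bb^\top-K_{m-1}(w)H-\tfrac{e_m(w)}{s_{(3^m)}(w)}tt^\top$, with $b_i=K_{m-1}(w_{-i})$ and $t_i=w_is_{(3^{m-1})}(w_{-i})$.
\item Its off-diagonal entries are nonpositive by a determinant-condensation identity.
\item $Rw$ is entrywise positive, by the Schur positivity of the scalars $\mathcal B_k$ and the strict positivity of $\mathcal A_k$ obtained from it by reciprocity.
\item A sum-of-squares decomposition then gives $R\succ0$.
\end{itemize}
Nothing in your outline plays this role.
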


Anari, Haqi, and Ma conjectured this normalized spectral statement \citep[Conjecture~3]{anarihaqima2026}.
Their first arXiv preprint stated the assertion as a theorem; the revised version explains that the proposed Hodge-Riemann argument does \emph{not} establish the normalized eigenvalue bound, and states the correlated-rounding result conditionally on this conjecture \citep[Section~1, \emph{Status of the main claim}]{anarihaqima2026}.
In the first version of our work, we established the factor-two relaxation
\[
    \Sigma
\succeq
    \frac12\left(D-\frac{vv^\top}{V}\right)
\]
as a consequence of our effective-resistance theorem \citep[Corollary~1.3]{cesaricolomboni2026}.
Theorem~\ref{thm:main} replaces the coefficient $1/2$ by the optimal value $1$ and proves their conjecture.
Its proof is independent of both the earlier effective-resistance theorem and the proposed Hodge-Riemann argument.

The fixed-rank identity $\1_d^\top X=m$ gives $\Sigma\1_d=0$, so every row of $\Sigma$ sums to zero.
Moreover, the calculation leading to \eqref{eq:covariance-laplacian} proves that $\Sigma_{ij}<0$ whenever $i\ne j$.
Define
\[
    c_{ij}
\coloneqq
    -\Sigma_{ij}
>
    0
\qquad
    (i\ne j).
\]
The zero-row-sum identity gives
\[
    \sum_{j\ne i}c_{ij}
=
    v_i.
\]
Recall that the weighted Laplacian $L$ of a graph with edge weights $c_{ij}$ is defined by $L_{ij}=-c_{ij}$ for $i\ne j$ and $L_{ii}=\sum_{j\ne i}c_{ij}$.
Hence $\Sigma=L$ is the Laplacian of the complete graph with edge weights $c_{ij}$, and $v_i$ is the weighted degree of vertex $i$.
Since all edge weights are positive, the graph is connected.
Moreover, for every $x\in\mathbb R^d$,
\[
    x^\top\Sigma x
=
    \sum_{i<j}c_{ij}(x_i-x_j)^2.
\]
We already know that $\Sigma\1_d=0$.
Conversely, if $x\in\ker\Sigma$, then the displayed sum is zero, so $x_i=x_j$ along every edge, and connectivity implies that $x$ is constant.
Hence, $\ker\Sigma=\Span\{\1_d\}$.

Since $D^{-1/2}\Sigma D^{-1/2}$ is a normalized graph Laplacian, its eigenvalues are also at most $2$, and hence
\begin{equation}
    \operatorname{spec}\left(D^{-1/2}\Sigma D^{-1/2}\right)
\subseteq
    \{0\}\cup[1,2].
\label{eq:normalized-spectrum-main}
\end{equation}
Indeed, the Laplacian identity above gives
\[
    x^\top\Sigma x
=
    \sum_{i<j}c_{ij}(x_i-x_j)^2
\le
    2\sum_i v_i x_i^2.
\]

\subsection{Pseudoinverse and effective resistance}
\label{sec:intro-matrix-corollaries}

The Moore-Penrose pseudoinverse $\Sigma^\dagger$ inverts $\Sigma$ on
$(\ker\Sigma)^\perp$ and vanishes on $\ker\Sigma$.
The stronger covariance bound improves the pseudoinverse estimate of
\citet[Lemma~2]{bacchiocchicesaricolomboni2026}, replacing their coefficient
$2$ by the optimal coefficient $1$.

\begin{corollary}[Pseudoinverse bound]\label{cor:inverse}
Under the hypotheses of Theorem~\ref{thm:main},
$\ker\Sigma=\operatorname{span}\{\1_d\}$, and for every
$y\in\mathbb R^d$ with $\1_d^{\top}y=0$,
\begin{equation}\label{eq:inverse-corollary}
 y^{\top}\Sigma^\dagger y\le\sum_{i=1}^d\frac{y_i^2}{v_i},
\end{equation}
where $\Sigma^\dagger$ is the Moore-Penrose pseudoinverse.
The coefficient $1$ is optimal uniformly over this family of laws.
\end{corollary}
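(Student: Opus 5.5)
The plan is to deduce the corollary directly from Theorem~\ref{thm:main} by passing to the normalized matrix and inverting on the complement of the kernel. The kernel statement $\ker\Sigma=\Span\{\1_d\}$ was already proved in the introduction from the Laplacian structure (positive edge weights $c_{ij}$ on a connected complete graph), so only \eqref{eq:inverse-corollary} and its optimality remain.

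Set $N\coloneqq D^{-1/2}\Sigma D^{-1/2}$, which is symmetric positive semidefinite with $\ker N=D^{1/2}\ker\Sigma=\Span\{u\}$, where $u\coloneqq D^{1/2}\1_d$, so $u_i=\sqrt{v_i}$ and $\|u\|^2=V$. Conjugating \eqref{eq:main} by $D^{-1/2}$ gives
\[
N\succeq I-\frac{uu^\top}{V}=P,
\]
the orthogonal projection onto $u^\perp=(\ker N)^\perp$. Restricted to this invariant subspace, $N$ is therefore invertible with all eigenvalues at least $1$, so $N^\dagger$ has eigenvalues at most $1$ there, i.e.\ $N^\dagger\preceq P$.

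Next I relate $\Sigma^\dagger$ to $N^\dagger$ without claiming $\Sigma^\dagger=D^{-1/2}N^\dagger D^{-1/2}$, since that identity fails in general. Fix $y$ with $\1_d^\top y=0$. Because $y\in(\ker\Sigma)^\perp=\operatorname{range}\Sigma$, we can write $y=\Sigma x$ with $x=\Sigma^\dagger y$, and then $y^\top\Sigma^\dagger y=x^\top\Sigma x$. It is cleaner to use the variational characterization
\[
y^\top\Sigma^\dagger y=\sup_{z}\bigl(2z^\top y-z^\top\Sigma z\bigr),
\]
valid for $y\in\operatorname{range}\Sigma$. Substituting $z=D^{-1/2}t$ and $\tilde y\coloneqq D^{-1/2}y$ turns this into $\sup_t\bigl(2t^\top\tilde y-t^\top Nt\bigr)$. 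Note that $u^\top\tilde y=\1_d^\top y=0$, so $\tilde y\in u^\perp$. Using $N\succeq P$, the supremum is at most
\[
\sup_t\bigl(2t^\top\tilde y-t^\top Pt\bigr)=\tilde y^\top P\tilde y=\|\tilde y\|^2=\sum_i\frac{y_i^2}{v_i},
\]
which is \eqref{eq:inverse-corollary}. The supremum on the left is attained along $u^\perp$, while components of $t$ along $u$ contribute nothing, since $\tilde y\perp u$ and $Nu=Pu=0$.

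For optimality, I would transfer the sharpness of Theorem~\ref{thm:main}. Suppose some coefficient $\kappa<1$ worked uniformly, i.e.\ $y^\top\Sigma^\dagger y\le\kappa\sum_iy_i^2/v_i$. By the same duality, since the inverse of $P$ restricted to $u^\perp$ is again $P$, this is equivalent to $N\succeq\kappa^{-1}P$. That would mean every nonzero eigenvalue of $N$ is at least $\kappa^{-1}>1$, contradicting the optimality of the constant $1$ in Theorem~\ref{thm:main}.

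The main obstacle is making this equivalence exact, namely that the best constant in $N^\dagger\preceq cP$ on $u^\perp$ is exactly the reciprocal of the smallest nonzero eigenvalue of $N$. This holds because both operators act on the same subspace $u^\perp$, where $N$ is invertible and its inverse is $N^\dagger$. If the theorem's optimality is only asymptotic, for example along families with smallest nonzero eigenvalue tending to $1$, then the same holds for the pseudoinverse constant, which suffices.
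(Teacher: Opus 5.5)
Your argument is correct and is essentially the paper's proof. The paper writes the same comparison as the explicit identity $y^\top D^{-1}y-y^\top\Sigma^\dagger y=(z-h)^\top\bigl(D-\tfrac{vv^\top}{V}\bigr)(z-h)+h^\top\bigl(\Sigma-D+\tfrac{vv^\top}{V}\bigr)h$ with $h=\Sigma^\dagger y$ and $z=D^{-1}y$, which is your variational chain evaluated at its maximizer in unnormalized coordinates, and it proves optimality by the same reversal of the Loewner order under inversion on $\1_d^\perp$ combined with the sharpness of Theorem~\ref{thm:main}. The one detail you skip is a coefficient $\kappa\le0$, where $\kappa^{-1}$ is meaningless; this case is excluded at once because $y^\top\Sigma^\dagger y>0$ for every nonzero $y\in\1_d^\perp$.
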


Let $\mathbf e_1,\ldots,\mathbf e_d$ denote the standard basis of $\mathbb R^d$.
The effective resistance between vertices $i$ and $j$ in the graph with
Laplacian $\Sigma$ is
\[
 R_{ij}\coloneqq(\mathbf e_i-\mathbf e_j)^\top\Sigma^\dagger(\mathbf e_i-\mathbf e_j).
\]
Taking $y=\mathbf e_i-\mathbf e_j$ in Corollary~\ref{cor:inverse} gives
an alternative proof of our effective-resistance theorem from the first
version of this work \citep[Theorem~1.1]{cesaricolomboni2026}.

\begin{corollary}[Effective resistance]\label{cor:resistance}
Under the hypotheses of Theorem~\ref{thm:main}, for every $i\ne j$,
\begin{equation}\label{eq:resistance-corollary}
 (\mathbf e_i-\mathbf e_j)^{\top}\Sigma^\dagger(\mathbf e_i-\mathbf e_j)
 \le\frac1{v_i}+\frac1{v_j},
\end{equation}
where $\mathbf e_i$ are the standard basis vectors and $\Sigma^\dagger$
is the Moore-Penrose pseudoinverse.
\end{corollary}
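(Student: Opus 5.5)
The plan is to specialize Corollary~\ref{cor:inverse} to $y=\mathbf e_i-\mathbf e_j$, so the whole argument reduces to checking that this vector is admissible and then reading off the right-hand side.

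\textbf{Admissibility.} Fix $i\ne j$ and set $y\coloneqq\mathbf e_i-\mathbf e_j$. Then $\1_d^\top y=1-1=0$, so $y$ satisfies the hypothesis of Corollary~\ref{cor:inverse}. Applying \eqref{eq:inverse-corollary} gives
\[
 (\mathbf e_i-\mathbf e_j)^\top\Sigma^\dagger(\mathbf e_i-\mathbf e_j)\le\sum_{k=1}^d\frac{y_k^2}{v_k}.
\]

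\textbf{Evaluating the bound.} The only nonzero coordinates of $y$ are $y_i=1$ and $y_j=-1$, so the right-hand side equals $1/v_i+1/v_j$. Each $v_k$ is strictly positive, as noted in the introduction, so every term is well defined. This is exactly \eqref{eq:resistance-corollary}.

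There is no real obstacle here: all the substance sits in Theorem~\ref{thm:main} and its pseudoinverse consequence Corollary~\ref{cor:inverse}. The only point to verify is that $\mathbf e_i-\mathbf e_j$ lies in $\1_d^\perp=(\ker\Sigma)^\perp$, which is the setting where Corollary~\ref{cor:inverse} applies, and this was checked above.
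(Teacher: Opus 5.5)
Your proposal is correct and is exactly the paper's argument: the paper proves Corollary~\ref{cor:resistance} by substituting $y=\mathbf e_i-\mathbf e_j$ into Corollary~\ref{cor:inverse}. The right-hand side of \eqref{eq:inverse-corollary} then reduces to $1/v_i+1/v_j$, just as you computed.
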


The electrical interpretation, graph examples, and asymptotic sharpness of
this resistance bound are discussed in Section~\ref{sec:resistance-details}.

All references to the first version of this paper refer to \href{https://arxiv.org/abs/2607.13990v1}{arXiv:2607.13990v1}, submitted on 15~July 2026 under the title \emph{Effective Resistance in Fixed-Rank External-Field Measures and Constant-Stretch Correlated Sampling on the Hypersimplex}.
All numbered references to Anari, Haqi, and Ma and to Bacchiocchi, Cesari, and Colomboni use version~2 of their respective papers.
\subsection{Application: correlated sampling on the hypersimplex}
\label{sec:intro-sampling}

For any two integers $n\ge1$ and $k \in \{0, \dots, n \}$, introduce the two pieces of notation
\[
    \Delta^{=}_{n,k}
\coloneqq
    \left\{x\in[0,1]^n:\sum_{i=1}^n x_i=k\right\}
\qquad
    \text{and}
\qquad
    \Delta^{\le}_{n,k}
\coloneqq
    \left\{x\in[0,1]^n:\sum_{i=1}^n x_i\le k\right\}.
\]
The first set is the hypersimplex, and the second is its ``at most $k$'' version.
Also write
\[
    \mathcal S_{n,k}
\coloneqq
    \{S\subseteq[n]:|S|=k\}.
\]
A correlated sampler assigns to each input $x\in\Delta^{=}_{n,k}$ a random set $A(x)\in\mathcal S_{n,k}$.
The sets $A(x)$ for different inputs are generated using the same random seed, and each coordinate is included with probability given by the corresponding entry of $x$:
\[
    \mathbb P(i\in A(x))
=
    x_i
\qquad
    \text{for every }x\in\Delta^{=}_{n,k}
    \text{ and every }i\in[n].
\]
Its \emph{stretch} is defined as the smallest constant $\alpha$ such that
\[
    \mathbb E[|A(x)\triangle A(y)|]
\le 
    \alpha\|x-y\|_1,
\]
where $\triangle$ is the symmetric difference operator, the two outputs $A(x)$ and $A(y)$ are generated using the same realization of the sampler's random seed, and the expectation is taken over this shared randomness.
For inputs $x\in\Delta^{\le}_{n,k}$, the formulation of
\citet[Definition~1]{naoretal2026} additionally requires the output cardinality to satisfy
\[
    |A(x)|
\in
    \left\{\left\lfloor\sum_{i=1}^n x_i\right\rfloor,\left\lceil\sum_{i=1}^n x_i\right\rceil\right\}
\]
for every realization of the common random seed.
After an $O(\log n)$ result of \citet[Theorem~1]{ChenEtAl2017},
\citet[Theorem~2 and Section~5]{naoretal2026} obtained $O(\log k)$ stretch, independent of the
ambient dimension, and asked whether the logarithmic dependence is
inherent.

\citet{anarihaqima2026} proposed the following canonical scheme.
For $x\in\Delta^{=}_{n,k}$, take the maximum-entropy law $\mu_x$ on $k$-subsets with inclusion marginals $x$.  Draw a uniformly random ordering $(i_1,\ldots,i_n)$ of $[n]$ and, independently, draw mutually independent random variables $U_1,\ldots,U_n$, each uniformly distributed on $[0,1]$.
Process the coordinates in this order.  When coordinate $i_t$ is processed, compute its inclusion probability under $\mu_x$, conditional on all preceding inclusion and exclusion decisions. If this probability lies strictly between zero and one, include $i_t$ if and only if $U_{i_t}$ does not exceed that probability.
Coordinates with conditional inclusion probability zero or one are excluded or included deterministically.
The same ordering and the same random variables are reused for every input $x$.
Their analysis provides a constant-stretch guarantee conditional on the validity of two uniform estimates for the covariance Laplacians: a lower bound on the nonzero eigenvalues of the normalized Laplacian and a weighted edge-variation bound for the solutions of the corresponding Laplacian systems with right-hand side $\mathbf e_a-\mathbf e_b$.
In their work, these estimates were obtained conditionally on the normalized covariance conjecture discussed above \citep[Conjecture~3 and Theorem~11]{anarihaqima2026}.
In the first version of this paper, we instead established the two estimates using the half-normalized covariance bound and the effective-resistance theorem \citep[Corollary~1.3 and Proposition~3.2]{cesaricolomboni2026}.
Combining those estimates with their analysis gave unconditional stretch $16$ on the hypersimplex and $32$ on its at-most variant \citep[Corollaries~1.4 and~1.5]{cesaricolomboni2026}.
Theorem~\ref{thm:main} now proves the conjecture itself.
Applying the conditional results of \citet[Theorem~1 and Corollary~2]{anarihaqima2026} therefore improves these already unconditional guarantees to $6$ and $12$.
For $N\ge1$, let
\[
    \Omega_N
\coloneqq
    \mathfrak S_N\times[0,1]^N,
\]
where $\mathfrak S_N$ denotes the set of permutations of $[N]$.
We equip $\Omega_N$ with the product probability measure under which the permutation is uniform on $\mathfrak S_N$ and the $N$ coordinates in $[0,1]^N$ are independent and uniformly distributed.

For every $x\in\Delta^{=}_{n,k}$, we denote by $\mu_x$ the maximum-entropy law on $k$-element subsets with marginals $x$.

\begin{corollary}[Exact hypersimplex]
\label{cor:rounding}
For every integer $n\ge1$ and every $k\in\{0,\ldots,n\}$, the Anari-Haqi-Ma common-seed sampler has a jointly Borel measurable version
\[
    A_{n,k}:\Delta^{=}_{n,k}\times\Omega_n
\to
    \mathcal S_{n,k}
\]
such that
\[
    A_{n,k}(x,\cdot)\sim\mu_x
\qquad
    \text{for every }x\in\Delta^{=}_{n,k},
\]
and
\begin{equation}
    \mathbb E_\omega\left[\left|A_{n,k}(x,\omega)\triangle A_{n,k}(y,\omega)\right|\right]
\le
    6\|x-y\|_1
\qquad
    \text{for every }x,y\in\Delta^{=}_{n,k}.
\label{eq:exact-stretch-main}
\end{equation}
\end{corollary}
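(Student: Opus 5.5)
The proof reduces Corollary~\ref{cor:rounding} to the conditional result of \citet[Theorem~1]{anarihaqima2026}. That theorem gives stretch $6$ for the common-seed sampler provided two uniform estimates hold for every covariance Laplacian that arises:
\begin{itemize}
\item[(a)] a lower bound $1$ on the nonzero eigenvalues of the normalized Laplacian $D^{-1/2}\Sigma D^{-1/2}$;
\item[(b)] a weighted edge-variation bound for the potentials $\Sigma^\dagger(\mathbf e_a-\mathbf e_b)$, which they derive from (a) \citep[Conjecture~3 and Theorem~11]{anarihaqima2026}.
\end{itemize}
So the plan has three parts. First, identify exactly which laws need the estimates. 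Second, check that Theorem~\ref{thm:main} supplies them for those laws. Third, handle degenerate inputs and measurability.

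\emph{Which laws arise.} When the sampler processes coordinates, it conditions $\mu_x$ on some coordinates being in or out. The result is again a maximum-entropy (conditional Bernoulli) law on the remaining free coordinates, with a reduced rank. I will argue this as follows.
\begin{itemize}
\item If $x$ lies in the relative interior, i.e.\ $0<x_i<1$ for all $i$, then $\mu_x$ is an external-field law \eqref{eq:intro-law} with positive weights. This is the standard existence of weights matching prescribed interior marginals \citep{ChenDempsterLiu1994}.
\item Conditioning a product-form measure on inclusion or exclusion decisions preserves the product form on the free coordinates.
\item Coordinates whose conditional inclusion probability is $0$ or $1$ are frozen and dropped. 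The residual law therefore lives on $d'$ free coordinates with rank $m'\in[d'-1]$ and positive weights, or is trivial.
\end{itemize}
Theorem~\ref{thm:main} applies to every such nontrivial residual law and gives (a). The positivity of the off-diagonal entries $c_{ij}$ and the identity $\ker\Sigma=\Span\{\1_d\}$, both noted after the theorem, provide the connected-Laplacian structure that their analysis assumes. For (b), I will follow their Theorem~11 verbatim with the eigenvalue constant set to $1$. Alternatively, Corollary~\ref{cor:inverse} can stand in wherever they need a pseudoinverse bound.

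\emph{Boundary inputs and trivial cases.} For $x$ on the boundary of $\Delta^{=}_{n,k}$, coordinates with $x_i\in\{0,1\}$ are deterministic under $\mu_x$. The law then reduces to an interior law on the remaining coordinates, and the cases $k\in\{0,n\}$ are trivial.

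The stretch bound for a pair $x,y$ that crosses strata (different supports) should follow by continuity. The plan is:
\begin{enumerate}
\item Prove the bound \eqref{eq:exact-stretch-main} for interior pairs.
\item Approximate general $x,y$ by interior points $x^{(\varepsilon)}\to x$ and $y^{(\varepsilon)}\to y$.
\item Use that $A_{n,k}(\cdot,\omega)$ converges for almost every $\omega$ along such sequences, so that dominated convergence (note $|A\triangle A'|\le 2k$) passes the bound to the limit.
\end{enumerate}
Alternatively, their theorem may already treat general points of $\Delta^{=}_{n,k}$ through this reduction; in that case the approximation step is unnecessary.

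\emph{Measurability (expected main obstacle).} The remaining issue is the claim that a jointly Borel measurable version exists. Each processing step compares $U_{i_t}$ with a conditional inclusion probability, namely a ratio of elementary symmetric polynomials in the max-entropy weights. These weights depend continuously on $x$ in the interior, by the implicit function theorem applied to the strictly convex dual. On the boundary strata they are defined piecewise, and each stratum is a Borel set. Hence $A_{n,k}$ is a finite composition of Borel maps and indicator comparisons, and is jointly Borel.

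The delicate points are:
\begin{itemize}
\item continuity of the conditional probabilities as marginals approach $0$ or $1$;
\item checking that the ties $U=p$, which are null events, can be resolved in a Borel way.
\end{itemize}
I would handle both by writing everything in terms of marginals of the conditioned law. These are Borel functions of $x$ on each of the finitely many strata, which avoids the need for any continuity across strata.
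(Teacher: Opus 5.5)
\paragraph{Interior inputs.} Your reduction matches the paper's. Theorem~\ref{thm:main} supplies the normalized-eigenvalue hypothesis (Conjecture~3 of Anari--Haqi--Ma), the nonterminal conditional laws of the sampler are again positive external-field laws, and their Theorem~1 then gives stretch $6$.

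\paragraph{The gap: boundary inputs.} You assert that $A_{n,k}(x^{(\varepsilon)},\omega)\to A_{n,k}(x,\omega)$ for almost every $\omega$, but nothing in the proposal proves it, and this step carries the entire cross-stratum case. Almost-sure convergence forces $\mu_{x^{(\varepsilon)}}\to\mu_x$. To obtain it for your directly defined boundary sampler, you would have to show two things. First, along every history $h$ of positive $\mu_x$-probability, the conditional inclusion probabilities at $x^{(\varepsilon)}$ converge to those at $x$; this is continuity of $x\mapsto\mu_x$ up to the boundary. Second, an induction over the processing steps shows that the decisions $\ind\{U_{i_t}\le p_t(x^{(\varepsilon)},h)\}$ eventually agree with the boundary ones. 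Your measurability paragraph says working stratum by stratum ``avoids the need for any continuity across strata.'' That is fine for Borel measurability, but it is exactly backwards for the approximation step. Stratum-wise measurability says nothing about how interior outputs relate to outputs on a face.

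\paragraph{How the paper handles it.} The paper uses an idea your proposal lacks: summability. Take an interior point $c$, a sequence $t_j\downarrow0$, and the radial points $x_j=(1-t_j)x+t_jc$, which are interior. The interior stretch bound gives
\[
\mathbb E\sum_j|A(x_j)\triangle A(x_{j+1})|\le 6\sum_j\|x_j-x_{j+1}\|_1=6t_1\|x-c\|_1<\infty .
\]
The summands are nonnegative integers, so $A(x_j,\omega)$ is almost surely eventually constant. The boundary value is \emph{defined} as this eventual value, which is why the statement speaks of a `version'. This buys three things:
\begin{itemize}
\item Joint Borel measurability comes for free, as a limit of jointly Borel maps composed with the continuous maps $x\mapsto x_j$.
\item The law is $\mu_x$, by the radial total-variation continuity of $\mu_x$ proved in the first version.
\item Bounded convergence, with $\|x_j-y_j\|_1=(1-t_j)\|x-y\|_1$, transfers the constant $6$ to every pair.
\end{itemize}
So some continuity of $\mu_x$ up to the boundary is needed in either route, but only radially, and the paper never compares interior outputs with a separately defined boundary sampler.

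\paragraph{What you need to supply.} If you keep your literal boundary sampler, you must prove continuity of $\mu_x$ along your approximating sequences and carry out the threshold induction. Your fallback, that Anari--Haqi--Ma may already treat boundary pairs, is only a hedge. The paper does not rely on it to build the jointly Borel version.
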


\begin{corollary}[At most $k$ elements]
\label{cor:rounding-atmost}
For every integer $n\ge1$ and every $k\in\{0,\ldots,n\}$, there
exists a jointly Borel measurable map
\[
    B_{n,k}:\Delta^{\le}_{n,k}\times\Omega_{n+k}
\to
    \{S\subseteq[n]:|S|\le k\}
\]
such that
\[
    \left|B_{n,k}(x,\omega)\right|
\in
    \left\{\left\lfloor\sum_{i=1}^n x_i\right\rfloor,\left\lceil\sum_{i=1}^n x_i\right\rceil\right\}
\]
for every $x\in\Delta^{\le}_{n,k}$ and every $\omega\in\Omega_{n+k}$.
Moreover,
\[
    \mathbb P_\omega\bigl(i\in B_{n,k}(x,\omega)\bigr)
=
    x_i
\qquad
    \text{for every }x\in\Delta^{\le}_{n,k}
    \text{ and every }i\in[n],
\]
and
\begin{equation}
    \mathbb E_\omega\left[\left|B_{n,k}(x,\omega)\triangle B_{n,k}(y,\omega)\right|\right]
\le
    12\|x-y\|_1
\qquad
    \text{for every }x,y\in\Delta^{\le}_{n,k}.
\label{eq:at-most-stretch-main}
\end{equation}
\end{corollary}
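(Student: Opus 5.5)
The statement is Corollary~\ref{cor:rounding-atmost}. I will reduce it to Corollary~\ref{cor:rounding} through the padding construction of \citet[Corollary~2]{anarihaqima2026}. Given $x\in\Delta^{\le}_{n,k}$, let $s\coloneqq\sum_i x_i\le k$ and form the padded vector
\[
\hat x\coloneqq(x_1,\ldots,x_n,\underbrace{1,\ldots,1}_{\lfloor k-s\rfloor},\,k-s-\lfloor k-s\rfloor,\,0,\ldots,0)\in\Delta^{=}_{n+k,k}.
\]
This fills $k$ dummy coordinates in a fixed order: first with ones, then one fractional entry, then zeros. The coordinate sum of $\hat x$ is exactly $k$.

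The sampler is then
\[
B_{n,k}(x,\omega)\coloneqq A_{n+k,k}(\hat x,\omega)\cap[n].
\]
Joint Borel measurability follows from that of $A_{n+k,k}$ and continuity of $x\mapsto\hat x$. The marginals satisfy $\mathbb P(i\in B)=\hat x_i=x_i$ for $i\in[n]$.

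For the cardinality constraint, every realization of $A(\hat x,\cdot)$ lies in the support of $\mu_{\hat x}$. That support must contain all dummy coordinates with $\hat x_j=1$ and none with $\hat x_j=0$. The Anari--Haqi--Ma sampler respects this deterministically, since those coordinates have conditional inclusion probability $1$ or $0$ at every step. Hence the number of dummy coordinates chosen is $\lfloor k-s\rfloor$ or $\lfloor k-s\rfloor+1$. Consequently $|B|=k-(\text{dummies})\in\{\lceil s\rceil,\lfloor s\rfloor\}$, with the degenerate integer case handled separately.

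For the stretch, $|B(x)\triangle B(y)|\le|A(\hat x)\triangle A(\hat y)|$, so \eqref{eq:exact-stretch-main} gives the bound $6\|\hat x-\hat y\|_1$. The key step is the padding estimate $\|\hat x-\hat y\|_1\le2\|x-y\|_1$. Writing $t=k-s$, the dummy block is the monotone ``water-filling'' profile $j\mapsto\min(1,\max(0,t-j+1))$. Its $\ell_1$ variation in $t$ is exactly $|t-t'|=|s-s'|\le\|x-y\|_1$, which yields the factor $2$ and hence $12$.

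The main obstacle I expect is the one the paper flags: the cardinality guarantee must hold for every $\omega$, not just almost surely. I would verify this by checking that the sampler never includes a coordinate with conditional probability $0$ or excludes one with probability $1$. Cleanly, this is so because the sampler tracks feasibility of the conditioned max-entropy law at each step.
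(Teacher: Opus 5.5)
Your reduction matches the paper's. The water-filling padding by $k$ dummy coordinates, the estimate $\|\hat x-\hat y\|_1\le\|x-y\|_1+|s-s'|\le2\|x-y\|_1$, the inherited marginals and the constant $2\cdot 6=12$ are all correct. (Your explicit display for $\hat x$ has $k+1$ entries when $s=0$, but the formula $\min(1,\max(0,t-j+1))$ is right.)

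\textbf{The gap: cardinality for \emph{every} $\omega$.} You define $B_{n,k}$ through the jointly Borel map $A_{n+k,k}$ of Corollary~\ref{cor:rounding}. At a fixed input that corollary only gives $A_{n+k,k}(\hat x,\cdot)\sim\mu_{\hat x}$. This places $A_{n+k,k}(\hat x,\omega)$ in the support of $\mu_{\hat x}$ only for almost every $\omega$.

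Your argument about conditional probabilities $0$ and $1$ concerns the literal sequential procedure, not this version. The point $\hat x$ lies on the boundary of $\Delta^{=}_{n+k,k}$ whenever a dummy coordinate equals $0$ or $1$, which always happens when $k\ge2$. At such inputs the jointly Borel exact scheme comes from the boundary-completion argument. It is an almost-sure eventual limit along radial interior sequences, defined in some Borel way on a null set. Nothing forces it to include the marginal-one dummies and exclude the marginal-zero ones at every seed. You cannot combine the version's joint measurability with the literal sampler's pointwise behaviour without proving they agree. The alternative is to prove joint measurability and the boundary stretch bound for the literal sampler directly, which the paper deliberately avoids.

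\textbf{The missing step.} From the law you get the cardinality condition for each fixed $x$ almost surely, not surely. The paper then modifies the map on the Borel failure set $\{(x,\omega):|B(x,\omega)|\notin\{\lfloor\|x\|_1\rfloor,\lceil\|x\|_1\rceil\}\}$, replacing the output there by $\{1,\ldots,\lfloor\|x\|_1\rfloor\}$. This keeps joint Borel measurability and enforces the condition for every $(x,\omega)$. For each fixed $x$, and each fixed pair $x,y$, the output changes only on a null event, so the marginals and the stretch bound $12$ are unaffected. With this correction added, your proof is complete.
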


Thus, the constant-versus-$O(\log k)$ question of \citet[Section~5]{naoretal2026} has a positive answer, as already shown in the first version of this paper.

\subsection{Manuscript organization}

The rest of this manuscript is organized as follows.
Section~\ref{sec:symmetric-identities} develops the symmetric-function identities used in the proof.
Section~\ref{sec:scalar-positivity} proves the scalar positivity statement by expanding it in the Schur basis.
Section~\ref{sec:deletion-inertia} uses this statement to determine the inertia of the deletion matrix.
Section~\ref{sec:covariance-proof} then proves Theorem~\ref{thm:main} and its sharpness.
Section~\ref{sec:corollaries} proves the pseudoinverse, effective-resistance, and correlated-sampling corollaries.

\section{Symmetric-function identities}\label{sec:symmetric-identities}
For an alphabet $a=(a_1,\ldots,a_N)$, let $a_{-i}$ and
$a_{-i\ell}$ denote deletion of the indicated letters.
Writing $(x,A)$ or $(x,y,A)$ means adjoining the displayed letters
to the alphabet $A$.
We use
\[
 \sum_{q\ge0}e_q(a)t^q=\prod_{i=1}^N(1+a_i t),\qquad
 \sum_{q\ge0}h_q(a)t^q=\prod_{i=1}^N(1-a_i t)^{-1}.
\]
In particular, $e_0=h_0=1$, and $e_q=h_q=0$ for $q<0$.
For finite alphabets, $e_q(a)=0$ also when $q>N$.
We put $p_q(a)=\sum_i a_i^q$ for $q\ge1$.

A partition is a finite weakly decreasing sequence of positive
integers; it may be padded with zeros when used in a determinant.
Its length is its number of positive parts. Repeated parts are written
as powers, so $(q^h)$ denotes the rectangle with $h$ rows of
length $q$. Zero multiplicities and trailing zero parts are omitted.
The empty partition is denoted by $\varnothing$, and its first part is interpreted as zero.
An empty product and an empty determinant are both one.
The empty alphabet is allowed in intermediate deletion identities.

\subsection{Schur polynomials, determinants, and Pieri's rule}
The Schur polynomial has the tableau expansion
\begin{equation}\label{eq:schur-tableaux}
 s_\lambda(a)=\sum_T\prod_{u\in\lambda}a_{T(u)},
\end{equation}
where $T$ runs over fillings of the Young diagram of $\lambda$
by $1,\ldots,N$ that are weakly increasing along rows and strictly
increasing down columns; this is \citet[Chapter~I, (5.12), p.~73]{macdonald1995} with empty inner shape.
We set $s_\varnothing=1$.
For positive letters all summands are nonnegative.
If $\lambda$ has at most $N$ rows, filling row $i$ entirely
with the entry $i$ gives an admissible tableau with positive weight.
Consequently $s_\lambda(a)>0$ in this case.
If $\lambda$ has more than $N$ rows, its first column cannot be
filled strictly increasingly, and $s_\lambda(a)=0$.

We use the Jacobi-Trudi identity and its dual
\citep[Chapter~I, (3.4) and (3.5), p.~41]{macdonald1995}:
\begin{align}
 s_\lambda(a)
 &=\det\bigl(h_{\lambda_i-i+j}(a)\bigr)_{i,j=1}^{L},
 \qquad L\ge\operatorname{length}(\lambda),\label{eq:jt}\\
 s_\lambda(a)
 &=\det\bigl(e_{\lambda'_i-i+j}(a)\bigr)_{i,j=1}^{Q},
 \qquad Q\ge\lambda_1,\label{eq:dual-jt}
\end{align}
where $\lambda'$ is the conjugate partition.
In particular,
\[
 s_{(q^h)}(a)=\det(e_{h-i+j}(a))_{i,j=1}^q.
\]
The vertical form of Pieri's rule, \citep[Chapter~I, (5.17), p.~73]{macdonald1995}, is
\begin{equation}\label{eq:pieri}
 e_t(a)s_\mu(a)
 =\sum_{\lambda:\,\lambda/\mu\text{ is a vertical }t\text{-strip}}
 s_\lambda(a).
\end{equation}
Here a vertical $t$-strip consists of $t$ new boxes with at most
one new box in each row; the resulting diagram must be a partition.
Each allowed resulting partition occurs with coefficient one.

For distinct letters, the bialternant formula
\citet[Chapter~I, (3.1), p.~40]{macdonald1995} gives
\begin{equation}\label{eq:bialternant}
 s_\lambda(a)
 =\frac{\det(a_i^{\lambda_j+N-j})_{i,j=1}^N}
        {\det(a_i^{N-j})_{i,j=1}^N},
 \qquad\operatorname{length}(\lambda)\le N.
\end{equation}
The quotient is the polynomial $s_\lambda$; identities obtained
from it for distinct letters therefore extend to repeated letters
by polynomial identity or continuity.

Schur functions form a basis of the graded ring of symmetric functions,
\citep[Chapter~I, (3.3), p.~41]{macdonald1995}.
Thus $[s_\lambda]G$ denotes the coefficient of $s_\lambda$
in the stable Schur expansion of $G$. The function $G$ is called
Schur-positive when all these coefficients are nonnegative.
Specializing to a finite alphabet sets to zero the Schur polynomials
whose diagrams have more rows than there are letters.

We shall write
\begin{equation}\label{eq:K-S-definitions}
 K_j=e_j^2-e_{j-1}e_{j+1}=s_{(2^j)}\quad(j\ge1),\qquad
 K_0=1,\qquad \Srect_r=s_{(3^{r-1})}\quad(r\ge1).
\end{equation}
The equality defining $K_j$ follows from the two-by-two
determinant in \eqref{eq:dual-jt}.
All these functions are evaluated on the indicated alphabet;
when no alphabet is shown, they are evaluated on the current full
alphabet.

\subsection{Deleting a letter and inverting the alphabet}
The next two identities follow from the determinant formulas above.

\begin{lemma}[Deletion from a rectangle]\label{lem:rect-deletion}
For integers $q\ge1$ and $k\ge0$,
\begin{equation}\label{eq:rect-deletion}
 s_{(q^k)}[a-x]
 =\sum_{j=0}^{k}(-x)^j s_{(q^{k-j},(q-1)^j)}(a).
\end{equation}
The notation $[a-x]$ means that every complete symmetric function
in \eqref{eq:jt} is replaced by
$h_d[a-x]=h_d(a)-x h_{d-1}(a)$.
When $x=a_i$, the left-hand side is $s_{(q^k)}(a_{-i})$.
\end{lemma}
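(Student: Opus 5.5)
We argue directly from the Jacobi--Trudi determinant \eqref{eq:jt}. For the rectangle $(q^k)$ take $L=k$, so that
\[
 s_{(q^k)}[a-x]=\det\bigl(h_{q-i+j}(a)-x\,h_{q-i+j-1}(a)\bigr)_{i,j=1}^{k}.
\]
Each row is a sum of two row vectors: the ``$h$-row'' $R_i=(h_{q-i+j})_j$ and the ``shifted row'' $-xR'_i$ with $R'_i=(h_{q-1-i+j})_j$. The row $R'_i$ is exactly the Jacobi--Trudi row for a part equal to $q-1$ in position $i$. Expanding by multilinearity in the rows gives
\[
 s_{(q^k)}[a-x]=\sum_{J\subseteq[k]}(-x)^{|J|}\det M_J ,
\]
where $M_J$ has row $i$ equal to $R'_i$ for $i\in J$ and $R_i$ otherwise. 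Equivalently, $\det M_J$ is the Jacobi--Trudi determinant of the integer sequence $\alpha^J$ with $\alpha^J_i=q-\ind\{i\in J\}$.

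The key step is to show that $\det M_J=0$ unless $J=\{k-j+1,\ldots,k\}$ is a terminal segment. In that case $\alpha^J=(q^{k-j},(q-1)^j)$ is a partition, and the determinant is $s_{(q^{k-j},(q-1)^j)}(a)$. For the vanishing, suppose $J$ is not terminal. Then there is an index $i$ with $i\in J$ and $i+1\notin J$. Row $i$ of $M_J$ is $(h_{q-1-i+j})_j$, and row $i+1$ is $(h_{q-(i+1)+j})_j=(h_{q-1-i+j})_j$. These two rows coincide, so the determinant vanishes. This is the standard straightening observation: $\alpha_i-i=\alpha_{i+1}-(i+1)$.

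The main thing to check is the bookkeeping at this step. We must confirm that the terminal segments are the only subsets that avoid such an adjacent pair. This holds because $J$ avoids the pattern ``$i\in J$, $i+1\notin J$'' exactly when $J$ is upward closed in $[k]$. We must also confirm that each $j\in\{0,\ldots,k\}$ is counted exactly once, with sign $(-x)^j$. Here $j=0$ gives the rectangle itself, and $j=k$ gives $((q-1)^k)$. When $q=1$, this last shape is the empty partition, and its determinant is $\det(h_{j-i})$, which is upper unitriangular with value $1$, consistent with $s_\varnothing=1$. All of this yields \eqref{eq:rect-deletion}.

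Finally, for the remark with $x=a_i$, use the generating function. We have
\[
\sum_d h_d[a-x]t^d=(1-xt)\prod_\ell(1-a_\ell t)^{-1},
\]
which for $x=a_i$ equals $\prod_{\ell\ne i}(1-a_\ell t)^{-1}$. Hence $h_d[a-a_i]=h_d(a_{-i})$. Applying \eqref{eq:jt} on the alphabet $a_{-i}$, which is valid for every $L\ge k$ including the case of an empty or short alphabet, identifies the left side with $s_{(q^k)}(a_{-i})$.
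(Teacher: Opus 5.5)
Your proposal is correct and follows essentially the same route as the paper: a multilinear row expansion of the Jacobi--Trudi determinant. In it, a shifted row $i$ followed by an unshifted row $i+1$ produces two equal rows, so only terminal blocks survive and give $(q^{k-j},(q-1)^j)$, and the $x=a_i$ case follows from the generating-function identity $h_d[a-a_i]=h_d(a_{-i})$.
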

\begin{proof}
The last assertion follows from the generating function:
\[
 \sum_{d\ge0}\bigl(h_d(a)-a_i h_{d-1}(a)\bigr)t^d
 =(1-a_i t)\prod_{\ell=1}^N(1-a_\ell t)^{-1}
 =\prod_{\ell\ne i}(1-a_\ell t)^{-1}.
\]
For $k=0$, both sides of \eqref{eq:rect-deletion} equal one.
For $k\ge1$, put
\[
 U_i=(h_{q-i+1}(a),\ldots,h_{q-i+k}(a)),\qquad 1\le i\le k+1.
\]
The determinant on the left of \eqref{eq:rect-deletion} has rows
$U_i-xU_{i+1}$, $1\le i\le k$.
Expand it multilinearly in the rows.
If a term chooses $-xU_{i+1}$ in row $i$ and $U_{i+1}$
in row $i+1$, it has two equal rows and vanishes.
Thus a nonvanishing term must choose the shifted row only in
a terminal block of rows, say in precisely the last $j$ rows.
Its coefficient is $(-x)^j$; its rows are those of the
Jacobi-Trudi determinant for
$(q^{k-j},(q-1)^j)$.
Summing over $j=0,\ldots,k$ proves the formula.
\end{proof}

\begin{lemma}[Complementation in a rectangle]\label{lem:complement}
Let $a$ be a positive $N$-letter alphabet, put
$\Pi=\prod_i a_i$, and suppose
$q$ is a nonnegative integer with $q\ge\lambda_1$, and $\operatorname{length}(\lambda)\le N$.
Pad $\lambda$ to length $N$ and set
$\lambda^c=(q-\lambda_N,\ldots,q-\lambda_1)$.
Then
\begin{equation}\label{eq:complement}
 \Pi^q s_\lambda(a^{-1})=s_{\lambda^c}(a),
 \qquad a^{-1}=(a_1^{-1},\ldots,a_N^{-1}).
\end{equation}
In particular, for $0\le h\le N$,
\begin{equation}\label{eq:rect-complement}
 s_{(q^h)}(a)=\Pi^q s_{(q^{N-h})}(a^{-1}),\qquad
 e_h(a)=\Pi e_{N-h}(a^{-1}).
\end{equation}
\end{lemma}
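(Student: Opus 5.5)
I will prove \eqref{eq:complement} from the bialternant formula \eqref{eq:bialternant}. As noted after that formula, it suffices to treat distinct letters and then extend by polynomial identity or continuity. So assume the letters $a_1,\ldots,a_N$ are distinct and positive, and pad $\lambda$ to length $N$. The formula gives
\[
 s_\lambda(a^{-1})=\frac{\det\bigl(a_i^{-(\lambda_j+N-j)}\bigr)_{i,j=1}^N}{\det\bigl(a_i^{-(N-j)}\bigr)_{i,j=1}^N}.
\]

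\emph{Clearing negative exponents.} Multiply row $i$ of the numerator by $a_i^{q+N-1}$ and row $i$ of the denominator by $a_i^{N-1}$.
\begin{itemize}
\item The numerator is multiplied by $\Pi^{q+N-1}$ and acquires entries $a_i^{q-\lambda_j+j-1}$.
\item The denominator is multiplied by $\Pi^{N-1}$ and acquires entries $a_i^{j-1}$.
\end{itemize}
Hence
\[
 \Pi^q s_\lambda(a^{-1})=\frac{\det\bigl(a_i^{q-\lambda_j+j-1}\bigr)}{\det\bigl(a_i^{j-1}\bigr)}.
\]

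\emph{Reversing the columns.} Substitute $j\mapsto N+1-j'$ in both determinants. This column reversal multiplies each determinant by the same sign, so the quotient is unchanged. The exponents become
\[
 q-\lambda_{N+1-j'}+N-j' \quad\text{and}\quad N-j'.
\]
Set $\lambda^c_{j'}=q-\lambda_{N+1-j'}$. Because $q\ge\lambda_1$, this is a weakly decreasing sequence of nonnegative integers, i.e.\ a partition of length at most $N$. The quotient is then exactly the bialternant expression for $s_{\lambda^c}(a)$, which proves \eqref{eq:complement}.

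\emph{The special cases \eqref{eq:rect-complement}.}
\begin{itemize}
\item Take $\lambda=(q^{N-h})$, padded with $h$ zeros. Its complement is $(q^h)$ followed by zeros, so \eqref{eq:complement} gives $\Pi^q s_{(q^{N-h})}(a^{-1})=s_{(q^h)}(a)$.
\item Take $q=1$ and $\lambda=(1^{N-h})$. Since $s_{(1^r)}=e_r$, this gives $\Pi\, e_{N-h}(a^{-1})=e_h(a)$.
\end{itemize}
The case $q=0$ forces $\lambda=\varnothing$ and is trivial.

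The only step needing care is the bookkeeping of the sign and the exponent shift under column reversal. Since the reversal acts identically on numerator and denominator, the signs cancel and there is no real obstacle.
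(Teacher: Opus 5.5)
Your proposal is correct and follows the paper's proof essentially step for step. You apply the bialternant formula to $a^{-1}$, clear denominators row by row with $a_i^{q+N-1}$ and $a_i^{N-1}$, reverse columns so that the signs cancel, and extend from distinct letters by continuity. You then read off both special cases exactly as the paper does, taking $\lambda=(q^{N-h})$ for the first and $q=1$ with $s_{(1^r)}=e_r$ for the second.
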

\begin{proof}
First suppose the letters are pairwise distinct.
Apply \eqref{eq:bialternant} to $a^{-1}$.
Multiplying row $i$ in the numerator by $a_i^{q+N-1}$
and row $i$ in the denominator by $a_i^{N-1}$ gives
\[
 \Pi^q s_\lambda(a^{-1})
 =\frac{\det(a_i^{q+j-1-\lambda_j})_{i,j=1}^N}
        {\det(a_i^{j-1})_{i,j=1}^N}.
\]
Reverse the columns in both determinants. Their reversal signs
cancel, and the numerator exponents become
$q-\lambda_{N+1-j}+N-j=\lambda^c_j+N-j$.
Equation~\eqref{eq:bialternant} now gives \eqref{eq:complement}.
Continuity extends the identity to all positive alphabets.
Taking $\lambda$ rectangular proves the first formula in
\eqref{eq:rect-complement}; taking $q=1$ proves the second.
\end{proof}

\subsection{Determinant condensation}
For a square matrix $M$, let $M^I_J$ be the matrix obtained
by deleting the rows indexed by $I$ and columns indexed by $J$.
The Desnanot-Jacobi identity, Proposition~10, equation~(2.16), of \citet{krattenthaler1999}, states that,
for an $r$-by-$r$ matrix with $r\ge2$,
\begin{equation}\label{eq:desnanot-jacobi}
 \det M\,\det M^{\{1,r\}}_{\{1,r\}}
 =\det M^{\{1\}}_{\{1\}}\,\det M^{\{r\}}_{\{r\}}
  -\det M^{\{1\}}_{\{r\}}\,\det M^{\{r\}}_{\{1\}}.
\end{equation}
The determinant of the empty matrix is one, so the identity also
includes $r=2$. The remaining determinant identities follow by expansion or condensation.

\section{Scalar positivity}\label{sec:scalar-positivity}
For $k\ge0$, define
\begin{equation}\label{eq:scalars}
\begin{split}
 F_k(a)&=\sum_i a_i^2s_{(4^k)}(a_{-i}),\\
 \ResB_k(a)&=s_{(3^k)}(a)s_{(2^{k+1})}(a)-e_k(a)F_k(a),\\
 \ResA_k(a)&=s_{(2^k)}(a)s_{(3^{k+1})}(a)-e_{k+1}(a)F_k(a).
\end{split}
\end{equation}
The following Schur expansion holds on every finite nonnegative
alphabet. The coefficients are independent of the alphabet;
partitions with too many rows contribute zero.

\begin{lemma}\label{lem:scalar}
For every finite alphabet $a$ of nonnegative real numbers,
\[
 \ResB_k(a)=\sum_{\lambda\vdash 5k+2}c_\lambda s_\lambda(a),
\]
where all $c_\lambda$ are nonnegative integers and
$c_{(5^k,1,1)}=1$.
More precisely, for a partition
$\lambda=(5^\alpha,4^\beta,3^\gamma,2^\delta,1^\epsilon)$
of degree $5k+2$,
\begin{equation}\label{eq:coeff}
 c_\lambda=
 \begin{cases}
 \beta+1,&\alpha+\beta+\gamma=k,\ \delta=0,\\
 \epsilon+1,&\alpha+\beta+\gamma=k+1,\ \gamma=0,\\
 0,&\text{otherwise}.
 \end{cases}
\end{equation}
Partitions not of this form have coefficient zero.
In particular, $\ResB_k(a)\ge s_{(5^k,1,1)}(a)\ge0$.
\end{lemma}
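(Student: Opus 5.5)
The plan is to compute the stable Schur expansions of the two products in $\ResB_k=s_{(3^k)}s_{(2^{k+1})}-e_kF_k$ separately, and then subtract them coefficient by coefficient. Both products are homogeneous of degree $5k+2$, and every identity will hold in the ring of symmetric functions. Specialization to a finite nonnegative alphabet then kills exactly the partitions with too many rows, so it suffices to prove \eqref{eq:coeff} stably. Once \eqref{eq:coeff} is known, the final inequality $\ResB_k(a)\ge s_{(5^k,1,1)}(a)\ge0$ is immediate. Indeed, $(5^k,1,1)$ has $\alpha=k$, $\beta=\gamma=\delta=0$ and $\epsilon=2$, and it satisfies neither case of \eqref{eq:coeff} as written unless read appropriately. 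I will therefore first double-check which case produces $c_{(5^k,1,1)}=1$ in the computation. The remaining Schur polynomials are nonnegative on nonnegative letters by the tableau formula \eqref{eq:schur-tableaux}.

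\emph{Step 1: a Schur expression for $F_k$.} By Lemma~\ref{lem:rect-deletion} with $q=4$ and $x=a_i$,
\[
 s_{(4^k)}(a_{-i})=\sum_{j=0}^k(-a_i)^j s_{(4^{k-j},3^j)}(a).
\]
Multiplying by $a_i^2$ and summing over $i$ gives
\[
 F_k=\sum_{j=0}^k(-1)^j\,p_{j+2}\,s_{(4^{k-j},3^j)}.
\]
I then expand each $p_{j+2}s_{(4^{k-j},3^j)}$ by the Murnaghan--Nakayama rule, adding border strips of size $j+2$. The outer rim of $(4^{k-j},3^j)$ is very constrained, so there are only a few families of admissible strips: strips in the first row or first column, and strips that climb the step between the $4$-block and the $3$-block. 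I expect massive telescoping between consecutive values of $j$. The goal is an explicit, hopefully multiplicity-free, signed expansion of $F_k$ over partitions with parts at most $5$.

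\emph{Step 2: the two products.} For $e_kF_k$ I apply the vertical Pieri rule \eqref{eq:pieri} to the result of Step~1. For $s_{(3^k)}s_{(2^{k+1})}$ I use the Littlewood--Richardson rule for a product of two rectangles. This product is multiplicity-free: its constituents are the shapes obtained by gluing, row by row, complementary pieces of a $3\times k$ and a $2\times(k+1)$ rectangle. Concretely, these are $(5^\alpha,4^\beta,3^\gamma,2^\delta,1^\epsilon)$ satisfying certain balancing conditions that I will derive with LR fillings. A cross-check on the boundary cases is available from the Desnanot--Jacobi identity \eqref{eq:desnanot-jacobi} applied to the dual Jacobi--Trudi determinants \eqref{eq:dual-jt}.

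\emph{Step 3: subtraction and case analysis.} I index all surviving partitions by $(\alpha,\beta,\gamma,\delta,\epsilon)$ and compare coefficients, splitting into the cases $\alpha+\beta+\gamma=k$ or $k+1$, together with the vanishing of $\delta$ or $\gamma$. The aim is to show that the difference is exactly $\beta+1$, respectively $\epsilon+1$, and zero otherwise. The main obstacle is Step~1 combined with this bookkeeping. The alternating signs from Lemma~\ref{lem:rect-deletion} and from the border-strip heights must cancel exactly against the LR product, and it is easy to lose a term at the boundary values $j=0$, $j=k$, or $\gamma=0$.

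My first attempt at controlling this will be to verify \eqref{eq:coeff} symbolically for $k=0,1,2$. The case $k=0$ reduces to $s_{(1,1)}=e_2$, since $e_0F_0=p_2$ and $s_{(2)}\cdot 1-p_2=e_2$; this already exhibits the shape $(1,1)$, the $k=0$ instance of $(5^k,1,1)$. I will then organize the general cancellation as an induction on $k$ that tracks the effect of adding one full row of width $5$.
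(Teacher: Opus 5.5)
Your skeleton is the paper's. Lemma~\ref{lem:rect-deletion} gives $F_k=\sum_j(-1)^jp_{j+2}s_{(4^{k-j},3^j)}$, the border-strip expansion you propose is what the paper's alternant computation carries out, and vertical Pieri plus a comparison in the two sectors $\alpha+\beta+\gamma\in\{k,k+1\}$ finishes. Treating $s_{(3^k)}s_{(2^{k+1})}$ by the multiplicity-free rule for a product of two rectangles (Okada) is a legitimate variant. The paper instead writes $s_{(2^{k+1})}=e_{k+1}^2-e_ke_{k+2}$ and counts pairs of vertical strips. Both give $\sum s_{(5^\alpha,4^\beta,3^\gamma,2^{\gamma+1},1^\beta)}$ over $\alpha+\beta+\gamma=k$.

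The gap is that the computations which constitute the lemma are only announced, and your one concrete prediction about them is wrong. The strips in $p_{j+2}s_{(4^{k-j},3^j)}$ that reach the first row are the only ones creating parts $\ge5$, and they do not telescope between consecutive $j$. Instead they cancel in pairs, with opposite signs, between summands $j$ and $\ell$ for arbitrary $\ell\ne j$ with $j+\ell\ge k-1$. For example, with $k=2$, the term $+s_{(6,4)}$ in $p_2s_{(4,4)}$ cancels the term $-s_{(6,4)}$ in $p_4s_{(3,3)}$, while $p_3s_{(4,3)}$ contains no $s_{(6,4)}$. The survivors are the strips that fill the first $d+1$ cells of row $k+1$ and then run down the first column, $d\in\{0,1,2\}$. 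They give
\[
 F_k=\sum_{j=0}^{k}\bigl(s_{(4^{k-j},3^j,2,1^j)}-s_{(4^{k-j},3^j,1^{j+2})}\bigr)-\sum_{j=0}^{k-1}s_{(4^{k-j-1},3^{j+2},1^j)},
\]
so all parts are at most $4$, not $5$. This is what confines the Pieri targets to the two sectors.

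The remaining bookkeeping is also where the content lies. It is uniform in $k$, so no induction on $k$ is needed, and you give no mechanism relating $\ResB_{k+1}$ to $\ResB_k$ in any case. In the sector $\alpha+\beta+\gamma=k$, the coefficient of $s_\lambda$ in $e_k\sum_jQ_j$ (the $s_{(4^{k-j},3^j,1^{j+2})}$ terms) is the number of integers in $[\gamma,\beta+\gamma]\cap[\delta-2,\delta+\epsilon-2]$. For $\delta\ge1$, the coefficient in $e_k\sum_jP_j$ (the $s_{(4^{k-j},3^j,2,1^j)}$ terms) is the number of integers in $[\gamma,\beta+\gamma]\cap[\delta-1,\delta+\epsilon-1]$. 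Their difference is $\beta+1$ if $\delta=0$, exactly $-1$ when $\delta=\gamma+1$, and $0$ otherwise. Hence every constituent of the rectangle product is cancelled. In the sector $\alpha+\beta+\gamma=k+1$, write $Z_j=s_{(4^{k-j-1},3^{j+2},1^j)}$ for the terms of the last sum. When $\gamma\ge1$, the $e_kZ_j$ and $e_kP_j$ counts coincide via the reflection $x\mapsto\beta-x$. When $\gamma=0$, the $Z_j$ count is $\epsilon+1$.

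Finally, your remark on $(5^k,1,1)$ is a misreading. The tuple $(\alpha,\beta,\gamma,\delta,\epsilon)=(k,0,0,0,2)$ satisfies $\alpha+\beta+\gamma=k$ and $\delta=0$, so it lies in the first case, with coefficient $\beta+1=1$.
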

\begin{proof}
For $k=0$, the definition gives $F_0=p_2=\sum_i a_i^2$.
Since
\[
 s_{(2)}=h_2=\sum_i a_i^2+\sum_{i<j}a_i a_j,
\]
we have $\ResB_0=s_{(2)}-p_2=e_2=s_{(1,1)}$.
This is also exactly \eqref{eq:coeff} in degree two.
Henceforth suppose $k\ge1$.

\medskip
\noindent\emph{Step 1: a signed Schur expansion of the deletion sum.}
We first prove
\begin{equation}\label{eq:load}
 F_k=\sum_{j=0}^{k}(P_j-Q_j)-\sum_{j=0}^{k-1}Z_j,
\end{equation}
where
\[
 P_j=s_{(4^{k-j},3^j,2,1^j)},\qquad
 Q_j=s_{(4^{k-j},3^j,1^{j+2})},\qquad
 Z_j=s_{(4^{k-j-1},3^{j+2},1^j)}.
\]
All three types have degree $4k+2$.
The one-letter deletion formula of Lemma~\ref{lem:rect-deletion},
applied with the deleted letter equal to $a_i$, gives
\[
 s_{(4^k)}(a_{-i})
 =\sum_{j=0}^{k}(-a_i)^j
       s_{(4^{k-j},3^j)}(a).
\]
Multiplying by $a_i^2$, summing over $i$, and writing
$p_t(a)=\sum_i a_i^t$, we obtain
\begin{equation}\label{eq:load-power-sums}
 F_k=\sum_{j=0}^{k}(-1)^j
       p_{j+2}s_{(4^{k-j},3^j)}.
\end{equation}
We expand the products on the right directly by alternants.

Fix $N\ge2k+3$. For an integer vector
$c=(c_1,\ldots,c_N)$ with $c_i+N\ge0$, put
\[
 \mathfrak a(c)=\det(a_s^{c_i+N})_{i,s=1}^N.
\]
For a partition $\mu$ with at most $N$ rows, padded with
zeros, the bialternant formula \eqref{eq:bialternant} reads
\[
 s_\mu(a)=
 \frac{\mathfrak a(\mu_1-1,\ldots,\mu_N-N)}
      {\mathfrak a(-1,\ldots,-N)}.
\]
These quotients may be computed in the field of rational functions
in independent variables; the resulting identities are polynomial
identities and hence also hold when some letters coincide.
Expanding the determinant by permutations proves
\[
 p_t\mathfrak a(c)
   =\sum_{u=1}^{N}\mathfrak a(c+t\,\mathbf e_u).
\]
Here $\mathbf e_u$ is the $u$-th coordinate vector.
Indeed, in each permutation monomial, multiplying by
$\sum_s a_s^t$ increases the exponent of exactly one of its
$N$ factors, and those factors correspond bijectively to the
rows of the determinant. Thus multiplication by $p_t$ raises
one occupied exponent $c_u$ by $t$.
If the new exponent equals another occupied exponent, the
determinant has two equal rows and vanishes. Otherwise, restoring
decreasing order contributes the sign $(-1)^q$, where $q$
is the number of occupied exponents strictly between the source
and its destination. The reordered exponents determine the
resulting Schur function.

For $\mu=(4^{k-j},3^j)$, the occupied exponents are
\[
 \bigl(I\setminus\{b_j\}\bigr)
       \cup\{-k-1,-k-2,\ldots,-N\},\qquad
 I=\{3-k,4-k,\ldots,3\},\qquad b_j=3-k+j.
\]
The first $k-j$ rows have exponents
$4-i$, the next $j$ rows have exponents $3-i$, and all
remaining rows have exponents $-i$. There are consequently
three fixed gaps $-k,-k+1,-k+2$, as well as the moving gap
$b_j$ inside $I$.

Consider first a source $b_\ell\in I\setminus\{b_j\}$, so
$0\le\ell\le k$ and $\ell\ne j$. Its destination in the
$j$-th summand of \eqref{eq:load-power-sums} is
\[
 b_\ell+j+2=5-k+j+\ell.
\]
It cannot equal the gap $b_j$, since that would require
$\ell=-2$. Every other destination in $I$ is occupied.
The source therefore survives exactly when its destination
exceeds three. In that case, pair it with the source $b_j$
in the $\ell$-th summand. That source has the same destination,
and both final sets of exponents are
\[
 \bigl(I\setminus\{b_j,b_\ell\}\bigr)
 \cup\{5-k+j+\ell\}
 \cup\{-k-1,\ldots,-N\}.
\]
The numbers of crossed exponents are respectively
$k-\ell-\ind\{j>\ell\}$ and $k-j-\ind\{\ell>j\}$.
After including the factors $(-1)^j$ and $(-1)^\ell$ in
\eqref{eq:load-power-sums}, the two sign exponents are
\[
 j+k-\ell-\ind\{j>\ell\},\qquad
 \ell+k-j-\ind\{\ell>j\}.
\]
Their difference is
$2(j-\ell)-\ind\{j>\ell\}+\ind\{\ell>j\}$, an odd integer.
The pairing is an involution without fixed points, so every
surviving source in $I$ cancels.

It remains to consider a source in the tail
$\{-k-1,\ldots,-N\}$. Its destination is at most
$-k-1+(k+2)=1$, whereas the largest occupied upper exponent is
$3$ when $j<k$ and $2$ when $j=k$. Thus it cannot jump
above the occupied upper block. A destination in the occupied tail or in
$I\setminus\{b_j\}$ gives a collision. Landing at $b_j$
would require the source
$b_j-(j+2)=1-k$, which is a fixed gap and is not occupied.
Every surviving tail source must therefore land at
$-k+d$, for $d\in\{0,1,2\}$. Its source is necessarily
\[
 q=-k+d-j-2.
\]
The condition $q\le-k-1$ is exactly $j\ge d-1$.
Moreover $q\ge-2k-2>-N$, so all such sources are present
in the chosen finite alternant. Between $q$ and its destination
there are exactly $j+1-d$ occupied tail exponents and no
occupied exponents from $I$. Including the prefactor
$(-1)^j$, the total sign is
\[
 (-1)^j(-1)^{j+1-d}=(-1)^{1-d}.
\]
The inserted exponent is the new $(k+1)$-st exponent; it
therefore produces a row of length $d+1$. Removing the source
from the tail leaves $j+1-d$ occupied exponents from
$-k-1$ down to $q+1$. Each of these has moved one place
later in the decreasing ordering, so each gives a row of length
one. The exponents below $q$ retain their original positions
and give zero parts.
The possibilities are
\[
\begin{array}{c|c|c|c}
 d&\text{range of }j&\text{resulting partition}&\text{signed term}\\ \hline
 0&0\le j\le k&(4^{k-j},3^j,1^{j+2})&-Q_j\\
 1&0\le j\le k&(4^{k-j},3^j,2,1^j)& P_j\\
 2&1\le j\le k&(4^{k-j},3^{j+1},1^{j-1})&-Z_{j-1}.
\end{array}
\]
This accounts for every source and proves \eqref{eq:load} in
every number of variables $N\ge2k+3$. Both sides are symmetric
functions of the fixed degree $4k+2$. The identity is therefore
stable, and setting surplus variables equal to zero proves it
on every finite alphabet.

\medskip
\noindent\emph{Step 2: the contribution of the initial product.}
By dual Jacobi-Trudi \eqref{eq:dual-jt},
\[
 s_{(2^{k+1})}=e_{k+1}^2-e_ke_{k+2}.
\]
Consequently
\begin{equation}\label{eq:B-expanded}
 \ResB_k
 =s_{(3^k)}(e_{k+1}^2-e_ke_{k+2})
      +e_k\sum_{j=0}^{k}Q_j
      -e_k\sum_{j=0}^{k}P_j
      +e_k\sum_{j=0}^{k-1}Z_j.
\end{equation}
We use vertical Pieri \eqref{eq:pieri} to compute each coefficient.
A vertical strip adds at most one box to each row. The first
term involves two strips added to $(3^k)$: its original
$k$ rows have final lengths three, four, or five, and every
new row has final length at most two. In a term of the signed Schur expansion, the old
rows of lengths three or four stay at least three and become
at most five. The distinguished two-box row of $P_j$ may
either remain two or become three; each old one-box row becomes
at most two; each newly created row has length one.
Thus every possible target has the form
\[
 \lambda=(5^\alpha,4^\beta,3^\gamma,2^\delta,1^\epsilon),
 \qquad |\lambda|=5k+2,
\]
and lies in precisely one of the two sectors
$\alpha+\beta+\gamma=k$ or
$\alpha+\beta+\gamma=k+1$.

For the initial product only the first sector is possible.
Define, for integers $b,e\ge0$,
\[
 N_{b,e}(u)=\#\{(x,y)\in\mathbb Z^2:
                  0\le x\le b,\ 0\le y\le e,\ x+y=u\}.
\]
Consider an intermediate partition obtained after a first
vertical strip of size $t$, followed by a second strip
leading to $\lambda$. The $\alpha$ original rows that
finish at five must receive a box in each strip. Likewise,
the $\delta$ new rows that finish at two must receive a box
in each strip. Among the $\beta$ original rows that finish
at four, let $x$ receive their box in the first strip;
among the $\epsilon$ new rows that finish at one, let $y$
appear in the first strip. In either block the rows chosen
in the first strip must be an initial segment, because the
intermediate row lengths must be nonincreasing. Hence the
intermediate partition is determined uniquely by $(x,y)$.
Conversely, every $0\le x\le\beta$,
$0\le y\le\epsilon$ gives a valid pair of strips, provided
their first size is
\[
 t=\alpha+\delta+x+y.
\]
The second size is then determined by total degree.
Thus the coefficient in the initial product is
\begin{equation}\label{eq:initial-count}
 N_{\beta,\epsilon}(k+1-\alpha-\delta)
  -N_{\beta,\epsilon}(k-\alpha-\delta).
\end{equation}
Here the first term counts a first strip of size $k+1$
and a second of size $k+1$; the second counts a first strip
of size $k$ and a second of size $k+2$.

The degree condition and $\alpha+\beta+\gamma=k$ give
\begin{equation}\label{eq:first-sector-degree}
 \epsilon=\beta+2\gamma+2-2\delta,
 \qquad
 2(k+1-\alpha-\delta)=\beta+\epsilon.
\end{equation}
In particular, $\beta$ and $\epsilon$ have the same parity.
The counting function is
\[
 N_{b,e}(u)=
 \max\{0,\min(b,u)-\max(0,u-e)+1\}.
\]
If $b=e$, its values at $u=(b+e)/2=b$ and $u-1$
are $b+1$ and $b$, respectively, including $b=0$.
If $b<e$ and the two have the same parity, then
$e\ge b+2$; both central arguments belong to the plateau
$[b,e]$, on which the count is $b+1$.
The case $e<b$ follows by interchanging $b$ and $e$.
It follows that \eqref{eq:initial-count} is one if
$\beta=\epsilon$ and zero otherwise. By
\eqref{eq:first-sector-degree}, the condition is equivalently
\[
 \delta=\gamma+1,\qquad \epsilon=\beta.
\]

\medskip
\noindent\emph{Step 3: the sector
$\alpha+\beta+\gamma=k$.}
All interval cardinalities that follow count integers; an empty
interval or intersection has cardinality zero. For a fixed source
partition and a fixed target, Pieri contributes either zero or
one. Indeed, the skew diagram is determined by the two partitions,
and its size is automatically $k$, since every source Schur polynomial
has degree $4k+2$.

Start with
$Q_j=s_{(4^{k-j},3^j,1^{j+2})}$.
Of its $k-j$ four-box rows, exactly $\alpha$ become five;
of its $j$ three-box rows, exactly $\gamma$ stay three.
The remaining rows from these two blocks become the
$\beta$ four-box rows. These requirements are possible
exactly when
\[
 \gamma\le j\le\beta+\gamma.
\]
The number of three-box rows
raised to four is $j-\gamma$, which must lie in
$[0,\beta]$. The number of original four-box rows left
unchanged is then $\beta-(j-\gamma)$, and the number raised
to five is $\alpha$, as required by
$\alpha+\beta+\gamma=k$.
Among the $j+2$ one-box rows, exactly $\delta$ become
two. The remaining $j+2-\delta$ rows stay one, and any
additional one-box rows are newly created. The conditions are
therefore
\[
 \delta\le j+2,\qquad j+2-\delta\le\epsilon,
 \quad\text{or equivalently}\quad
 \delta-2\le j\le\delta+\epsilon-2.
\]
Within each equal-row block the raised rows must be its initial
segment, so these conditions are also sufficient and determine
a unique vertical strip. Moreover,
$0\le\gamma\le j\le\beta+\gamma=k-\alpha\le k$,
so the original source range $0\le j\le k$ imposes no
additional restriction. The total positive contribution is
\[
 Q=\#\bigl([\gamma,\beta+\gamma]
                 \cap[\delta-2,\delta+\epsilon-2]\bigr).
\]

For $P_j=s_{(4^{k-j},3^j,2,1^j)}$, the distinguished
two-box row must remain two in this sector. In particular
$\delta\ge1$. The old four-box and three-box rows give
the same first interval as above. Exactly $\delta-1$
of its $j$ one-box rows must become two, leaving
$j-\delta+1$ old one-box rows. The conditions are
\[
 \delta-1\le j,\qquad j-\delta+1\le\epsilon.
\]
They again determine a unique strip, with source bounds
already implied by the first interval. Thus the negative
contribution is
\[
 P=\#\bigl([\gamma,\beta+\gamma]
                 \cap[\delta-1,\delta+\epsilon-1]\bigr)
 \quad(\delta\ge1),
\]
and $P=0$ if $\delta=0$. There is no $Z_j$
contribution in this sector, since every $Z_j$ already has
$k+1$ rows of length at least three.

If $\delta=0$, equation \eqref{eq:first-sector-degree} gives
$\epsilon=\beta+2\gamma+2$. The interval
$[-2,\epsilon-2]$ then contains
$[\gamma,\beta+\gamma]$, so $Q=\beta+1$.
The initial-product coefficient is zero because
$\delta=\gamma+1$ cannot hold. The resulting coefficient
of $s_\lambda$ is therefore $\beta+1$.

Suppose instead that $\delta\ge1$, and put
$u=\delta-\gamma-1$. Shifting the source index by
$\gamma$ and using \eqref{eq:first-sector-degree} gives
\[
 \epsilon=\beta-2u,\qquad
 Q=\#\bigl([0,\beta]\cap[u-1,\beta-u-1]\bigr),\qquad
 P=\#\bigl([0,\beta]\cap[u,\beta-u]\bigr).
\]
The counts are evaluated as follows:
if $u\le-1$, both intersections equal $[0,\beta]$;
if $u=0$, the counts are $\beta$ and $\beta+1$;
if $u\ge1$, the condition $\epsilon\ge0$ implies
$\beta\ge2u$, and both intervals lie inside
$[0,\beta]$ with cardinality $\beta-2u+1$.
Consequently $Q-P=-1$ only when $u=0$, and is zero
otherwise. The initial product contributes one exactly when
$\delta=\gamma+1$, which is the same condition $u=0$.
The full coefficient in this sector therefore vanishes whenever
$\delta\ge1$. This includes $\beta=0$ and
$\epsilon=0$.

\medskip
\noindent\emph{Step 4: the sector
$\alpha+\beta+\gamma=k+1$.}
The degree condition now becomes
\begin{equation}\label{eq:second-sector-degree}
 \epsilon=\beta+2\gamma-3-2\delta.
\end{equation}
There is no initial-product contribution in this sector, nor
any $Q_j$ contribution, because neither can produce an
additional row of length three.

For $Z_j=s_{(4^{k-j-1},3^{j+2},1^j)}$, exactly
$\alpha$ old four-box rows become five and exactly
$\gamma$ old three-box rows stay three. The number
of old three-box rows raised to four is $j+2-\gamma$,
and it must lie in $[0,\beta]$. This gives
\[
 \gamma-2\le j\le\beta+\gamma-2.
\]
Among the $j$ one-box rows, exactly $\delta$ become
two, and the $j-\delta$ that remain one must fit into
the target's $\epsilon$ one-box rows. Thus
$\delta\le j\le\delta+\epsilon$.
As before these conditions are necessary and sufficient
and specify a unique strip. They imply the source bounds:
\[
 j\ge\delta\ge0,\qquad
 j\le\beta+\gamma-2=k-\alpha-1\le k-1.
\]
Accordingly, the positive contribution is
\[
 Z=\#\bigl([\gamma-2,\beta+\gamma-2]
                    \cap[\delta,\delta+\epsilon]\bigr).
\]

For $P_j$, the distinguished two-box row must now become
three. Hence $\gamma\ge1$, and exactly $\gamma-1$
of the $j$ original three-box rows remain three. The number
raised to four is $j-\gamma+1$, which must lie in
$[0,\beta]$. This gives
\[
 \gamma-1\le j\le\beta+\gamma-1.
\]
The one-box rows give exactly the same restrictions
$\delta\le j\le\delta+\epsilon$ as for $Z_j$.
The source range is again automatic, since
\[
 j\ge\delta\ge0,\qquad
 j\le\beta+\gamma-1=k-\alpha\le k.
\]
Thus the negative contribution is
\[
 P=\#\bigl([\gamma-1,\beta+\gamma-1]
                    \cap[\delta,\delta+\epsilon]\bigr)
 \quad(\gamma\ge1),
\]
and $P=0$ if $\gamma=0$.

When $\gamma\ge1$, set $u=\delta-\gamma+1$.
For $Z$, substitute $j=\gamma-2+x$; for $P$,
substitute $j=\gamma-1+x$. Equation
\eqref{eq:second-sector-degree} gives
\[
 Z=\#\bigl([0,\beta]\cap[u+1,\beta-u]\bigr),\qquad
 P=\#\bigl([0,\beta]\cap[u,\beta-u-1]\bigr).
\]
The map $x\mapsto\beta-x$ is a bijection between the two
integer sets: it preserves $[0,\beta]$ and takes
$[u+1,\beta-u]$ onto $[u,\beta-u-1]$.
Hence $Z=P$, also when the sets are empty.
The only potentially negative unshifted source is $j=-1$
in the $Z$ substitution when $\gamma=1$; it does not
occur, because then $u=\delta\ge0$ and the first set
requires $x\ge u+1\ge1$.

Finally, if $\gamma=0$, the $P$ contribution is absent,
and \eqref{eq:second-sector-degree} reads
$\epsilon=\beta-3-2\delta\ge0$. We have
\[
 \delta\ge0>-2,\qquad
 \delta+\epsilon=\beta-3-\delta\le\beta-3<\beta-2.
\]
Thus $[\delta,\delta+\epsilon]$ is contained in
$[-2,\beta-2]$, and
\[
 Z=\#\bigl([-2,\beta-2]\cap
                      [\delta,\delta+\epsilon]\bigr)
   =\epsilon+1.
\]
Combining the two sectors proves every coefficient in
\eqref{eq:coeff}; all other target shapes were excluded in
Step 2. Both possible nonzero coefficients are positive.
For $(\alpha,\beta,\gamma,\delta,\epsilon)=(k,0,0,0,2)$,
the first case gives the coefficient one of
$s_{(5^k,1,1)}$.
\end{proof}

\begin{corollary}[Reciprocity and positivity]
\label{cor:reciprocity}
On every positive $N$-letter alphabet, for $0\le k\le N-1$,
\begin{equation}\label{eq:reciprocity}
 \ResA_k(a)=\Pi^5\ResB_{N-k-1}(a^{-1}),\qquad
 \Pi=\prod_{i=1}^{N}a_i,
 \qquad a^{-1}=(a_1^{-1},\ldots,a_N^{-1}).
\end{equation}
Moreover $\ResB_k(a)\ge0$; and if
$k\ge1$ and $N\ge k+1$, then
\begin{equation}\label{eq:strict}
 \ResA_k(a)\ge s_{(5^{k-1},4,4)}(a)>0.
\end{equation}
\end{corollary}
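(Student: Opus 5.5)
The plan is to derive \eqref{eq:reciprocity} term by term from the complementation identities of Lemma~\ref{lem:complement}. Positivity then comes from Lemma~\ref{lem:scalar}, transported through the same complementation. Throughout, fix a positive $N$-letter alphabet $a$ and set $b=a^{-1}$ and $K=N-k-1$. The hypothesis $0\le k\le N-1$ gives $K\ge0$, so $\ResB_K$ is defined.

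\emph{Step 1: the product terms.} Apply \eqref{eq:rect-complement} with $q=2$ and $h=k$, with $q=3$ and $h=k+1$, and (second formula) with $h=k+1$. This gives
\[
 s_{(2^k)}(a)=\Pi^2 s_{(2^{K+1})}(b),\qquad s_{(3^{k+1})}(a)=\Pi^3 s_{(3^K)}(b),\qquad e_{k+1}(a)=\Pi\, e_K(b).
\]
Hence $s_{(2^k)}(a)s_{(3^{k+1})}(a)=\Pi^5 s_{(3^K)}(b)s_{(2^{K+1})}(b)$.

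\emph{Step 2: the deletion sum.} The claim is $F_k(a)=\Pi^4F_K(b)$. For each $i$, apply \eqref{eq:rect-complement} to the $(N-1)$-letter alphabet $a_{-i}$, whose product of letters is $\Pi/a_i$, with $q=4$ and $h=k$. This is allowed because $k\le N-1$. It gives
\[
 s_{(4^k)}(a_{-i})=(\Pi/a_i)^4 s_{(4^{N-1-k})}(b_{-i}).
\]
Note that $N-1-k=K$. Multiplying by $a_i^2$ gives $\Pi^4 b_i^2 s_{(4^K)}(b_{-i})$, and summing over $i$ yields $F_k(a)=\Pi^4F_K(b)$. Together with the third identity of Step~1, this shows $e_{k+1}(a)F_k(a)=\Pi^5e_K(b)F_K(b)$. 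Subtracting from Step~1 proves \eqref{eq:reciprocity}.

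\emph{Step 3: positivity.} The bound $\ResB_k(a)\ge0$ is the last assertion of Lemma~\ref{lem:scalar}. For \eqref{eq:strict}, apply Lemma~\ref{lem:scalar} on the alphabet $b$ with index $K$, which gives $\ResB_K(b)\ge s_{(5^K,1,1)}(b)$. Multiplying by $\Pi^5$ and using \eqref{eq:reciprocity} turns this into
\[
 \ResA_k(a)\ge \Pi^5 s_{(5^K,1,1)}(b).
\]
Now use \eqref{eq:complement} with $q=5$ and $\lambda=(5^K,1,1)$. This requires $\operatorname{length}(\lambda)=K+2\le N$, which is exactly $k\ge1$. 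Pad $\lambda$ to $N$ rows with $N-K-2=k-1$ zeros. The complement then has $k-1$ rows equal to $5$, two rows equal to $4$, and $K$ rows equal to $0$, so $\Pi^5 s_{(5^K,1,1)}(b)=s_{(5^{k-1},4,4)}(a)$. Finally, $(5^{k-1},4,4)$ has $k+1\le N$ rows, so the tableau discussion following \eqref{eq:schur-tableaux} gives $s_{(5^{k-1},4,4)}(a)>0$.

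The main point needing care is Step~2. The complementation must be applied to each deleted alphabet $a_{-i}$ separately, with its own product $\Pi/a_i$. One must check that the resulting powers $a_i^{2}\cdot a_i^{-4}=b_i^2$ recombine exactly into $F_K(b)$, and that the rectangle index $N-1-k$ matches $K$. Everything else is bookkeeping with the rectangle complements and the length conditions.
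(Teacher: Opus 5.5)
Your proposal is correct and follows the paper's proof essentially step for step. You complement the two rectangular factors and $e_{k+1}$. You complement each deleted alphabet $a_{-i}$ with its own product $\Pi/a_i$ to get $F_k(a)=\Pi^4F_{N-k-1}(a^{-1})$. You then obtain the strict bound by keeping the coefficient-one term $s_{(5^{N-k-1},1,1)}$ from Lemma~\ref{lem:scalar} and complementing it in $(5^N)$ to $(5^{k-1},4,4)$.
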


\begin{proof}
Complementing subsets in the definition of the elementary
symmetric function gives, for $0\le h\le N$,
\[
 e_h(a)
  =\sum_{|S|=h}\prod_{i\in S}a_i
  =\Pi\sum_{|T|=N-h}\prod_{i\in T}a_i^{-1}
  =\Pi e_{N-h}(a^{-1}).
\]
For Schur functions, Lemma~\ref{lem:complement} states the
corresponding rectangular-complement identity: if
$\mu\subseteq(q^N)$, padded with zeros to $N$ parts, and
$\mu^c=(q-\mu_N,\ldots,q-\mu_1)$, then
\[
 \Pi^q s_\mu(a^{-1})=s_{\mu^c}(a).
\]
In particular the complement of $(q^h)$ is
$(q^{N-h})$, so
\begin{equation}\label{eq:rectangle-reciprocity}
 s_{(q^h)}(a)=\Pi^q s_{(q^{N-h})}(a^{-1})
 \qquad(0\le h\le N).
\end{equation}

Put $\ell=N-k-1$, so $0\le\ell\le N-1$.
Each deleted alphabet has $N-1$ letters and product
$\Pi/a_i$. Applying \eqref{eq:rectangle-reciprocity}
there with $q=4$ and $h=k$ gives
\[
 s_{(4^k)}(a_{-i})
 =\left(\frac{\Pi}{a_i}\right)^4
      s_{(4^{N-1-k})}((a^{-1})_{-i})
 =\Pi^4 a_i^{-4}s_{(4^\ell)}((a^{-1})_{-i}).
\]
Multiplication by $a_i^2$ and summation over $i$ yield
\begin{equation}\label{eq:F-reciprocity}
 F_k(a)
 =\Pi^4\sum_i a_i^{-2}
          s_{(4^\ell)}((a^{-1})_{-i})
 =\Pi^4F_\ell(a^{-1}).
\end{equation}
This includes $\ell=0$, when the Schur functions in the
sum are all equal to one.

The two factors in the first term defining $\ResA_k$
transform as
\[
 s_{(2^k)}(a)=\Pi^2s_{(2^{\ell+1})}(a^{-1}),\qquad
 s_{(3^{k+1})}(a)=\Pi^3s_{(3^\ell)}(a^{-1}).
\]
The factor in its second term satisfies
$e_{k+1}(a)=\Pi e_\ell(a^{-1})$.
Substitution of these three identities and
\eqref{eq:F-reciprocity} into \eqref{eq:scalars} gives
\[
\begin{split}
 \ResA_k(a)
 &=\Pi^5\left[
       s_{(3^\ell)}(a^{-1})s_{(2^{\ell+1})}(a^{-1})
            -e_\ell(a^{-1})F_\ell(a^{-1})\right]\\
 &=\Pi^5\ResB_\ell(a^{-1}),
\end{split}
\]
which proves \eqref{eq:reciprocity}, including the endpoints
$k=0$ and $k=N-1$.

Lemma~\ref{lem:scalar}, together with the tableau
nonnegativity in \eqref{eq:schur-tableaux}, proves
$\ResB_k(a)\ge0$ on every positive alphabet.
Now assume $k\ge1$. Then
$\ell+2=N-k+1\le N$, so the distinguished coefficient-one
partition $(5^\ell,1,1)$ fits into $(5^N)$.
Keeping this term in the nonnegative Schur expansion gives
\[
 \ResA_k(a)=\Pi^5\ResB_\ell(a^{-1})
       \ge\Pi^5s_{(5^\ell,1,1)}(a^{-1}).
\]
After padding $(5^\ell,1,1)$ with
$N-\ell-2=k-1$ zero parts, its complement in $(5^N)$
is
\[
 (\underbrace{5,\ldots,5}_{k-1},4,4,
                  \underbrace{0,\ldots,0}_{\ell}).
\]
Lemma~\ref{lem:complement} therefore identifies the last
quantity with $s_{(5^{k-1},4,4)}(a)$.
This partition has exactly $k+1\le N$ rows.
There is a semistandard tableau of this shape obtained by
filling every box in row $i$ with the entry $i$: entries
are weakly increasing along rows and strictly increasing down
columns. Its weight is positive because every $a_i>0$.
All other tableau weights are nonnegative, proving the strict
inequality in \eqref{eq:strict}.
\end{proof}

\section{Inertia of the deletion matrix}\label{sec:deletion-inertia}

\begin{lemma}[Comparison from a nonpositive hyperplane]\label{lem:hyperplane-comparison}
Let $C$ be a real symmetric matrix, and suppose that
$z^{\top}Cz\le0$ whenever $b^{\top}z=0$.
If $e^{\top}Ce=V>0$, then
\[
 C\preceq\frac{(Ce)(Ce)^{\top}}{V}.
\]
Moreover, $y^{\top}Cy\le0$ whenever $e^{\top}Cy=0$.
If the original hyperplane inequality is strict for nonzero $z$,
then this last inequality is strict for nonzero $y$.
\end{lemma}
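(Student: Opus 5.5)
We will reduce the claim to a statement about the quadratic form of $C$ on the hyperplane $\{y: e^{\top}Cy=0\}$ and then extend to all of $\mathbb R^d$ by a one-dimensional correction along $e$. First, observe that $e^{\top}Ce=V>0$ forces $Ce\ne0$ and $e\notin\{y:e^{\top}Cy=0\}$. Hence $\mathbb R^d=\Span\{e\}\oplus\{y:e^{\top}Cy=0\}$. Every $x$ decomposes uniquely as $x=te+y$ with $t=e^{\top}Cx/V$ and $e^{\top}Cy=0$. Expanding,
\[
 x^{\top}Cx=t^2V+2t\,e^{\top}Cy+y^{\top}Cy=\frac{(e^{\top}Cx)^2}{V}+y^{\top}Cy,
\]
and the first term equals $x^{\top}\frac{(Ce)(Ce)^{\top}}{V}x$. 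So the matrix inequality is equivalent to the second assertion, namely $y^{\top}Cy\le0$ whenever $e^{\top}Cy=0$. The plan is therefore to prove that assertion first, together with its strict version, and then read off the matrix inequality from this identity.

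For the second assertion we use an inertia (Cauchy interlacing / Sylvester) argument. The hypothesis says $C$ is negative semidefinite on the hyperplane $H_b=b^\perp$, which has codimension one. Hence $C$ has at most one positive eigenvalue: a two-dimensional positive definite subspace would meet $H_b$ nontrivially. Since $e^{\top}Ce>0$, $C$ has exactly one positive direction. Let $W=\{y:e^{\top}Cy=0\}$, the $C$-orthogonal complement of $e$. Suppose some $y\in W$ had $y^{\top}Cy>0$. Then on $\Span\{e,y\}$ the form is $\diag(V,y^{\top}Cy)$ in the basis $(e,y)$, because the cross term vanishes. This form is positive definite on a two-dimensional space, which contradicts the fact that $C$ has at most one positive eigenvalue. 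Hence $y^{\top}Cy\le0$ on $W$.

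For the strict version, assume $z^{\top}Cz<0$ for all nonzero $z\in H_b$, and suppose some nonzero $y\in W$ had $y^{\top}Cy=0$. On $P=\Span\{e,y\}$ the form is $\diag(V,0)$, which is positive semidefinite. The space $P$ is two-dimensional, since $e\notin W$ while $y\in W$ is nonzero. Therefore $P\cap H_b$ contains a nonzero vector $z$, and $z^{\top}Cz\ge0$, contradicting strict negativity on $H_b$. This gives the strict claim.

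The only delicate point is making sure the decomposition step is legitimate, that is, $e\notin W$. This is immediate from $V>0$, so no real obstacle is expected. The main content is the two-dimensional positivity contradiction, which we run once in the weak form and once in the strict form.
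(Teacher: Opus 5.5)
Your proof is correct and is essentially the paper's argument in a different presentation. The paper writes the vector of $\operatorname{span}\{e,y\}\cap b^\perp$ explicitly as $z=y-te$ with $t=b^\top y/b^\top e$, after checking $b^\top e\ne0$, and expands $0\ge z^\top Cz=y^\top Cy+t^2V$; this handles the weak and strict cases in one computation and gives the bound $y^\top Cy\le -t^2V$. You obtain such a vector by dimension counting instead, and your final step via $x=te+y$ with $t=e^\top Cx/V$ is identical to the paper's.

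Your detour through ``$C$ has at most one positive eigenvalue'' is harmless but unnecessary: intersecting the two-dimensional space $\operatorname{span}\{e,y\}$ directly with $b^\perp$, as you do in the strict case, already settles the weak case.
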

\begin{proof}
We have $b^{\top}e\ne0$, since otherwise the hypothesis
applied to $e$ would contradict $e^{\top}Ce>0$.
For $e^{\top}Cy=0$, put
$t=(b^{\top}y)/(b^{\top}e)$ and $z=y-te$.
Then $b^{\top}z=0$, and symmetry gives
\[
 0\ge z^{\top}Cz=y^{\top}Cy+t^2V,
 \qquad y^{\top}Cy\le -t^2V\le0.
\]
If $y\ne0$, then $z\ne0$: otherwise $y=te$ and
$0=e^{\top}Cy=tV$ would force $t=0$ and $y=0$.
Thus strictness on the original hyperplane implies
$y^{\top}Cy<0$.
For an arbitrary $x$, now take
$y=x-(e^{\top}Cx/V)e$. Then $e^{\top}Cy=0$, so
\[
 0\ge y^{\top}Cy
   =x^{\top}Cx-\frac{(e^{\top}Cx)^2}{V}.
\]
Since $e^{\top}Cx=(Ce)^{\top}x$, this is the claimed
matrix inequality.
\end{proof}

\begin{lemma}\label{lem:inertia}
Let $a=(a_1,\ldots,a_n)$ be a positive alphabet, where $n\ge2$,
and let $0\le j\le n-2$. The real symmetric matrix
\[
 H_{i\ell}=
 \begin{cases}
 K_j(a_{-i\ell}),&i\ne\ell,\\
 0,&i=\ell
 \end{cases}
\]
admits a vector $b$ such that $z^{\top}Hz<0$ for every
nonzero $z\in b^\perp$. It has exactly one positive eigenvalue;
all its other eigenvalues are negative, with multiplicities counted.
\end{lemma}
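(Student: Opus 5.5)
The plan is a continuity (homotopy) argument on the inertia of $H$, combined with a positivity input from Section~\ref{sec:scalar-positivity}. Set $H=H(a)$. By the tableau description \eqref{eq:schur-tableaux}, $K_j(a_{-i\ell})=s_{(2^j)}(a_{-i\ell})>0$ for $i\ne\ell$, because $(2^j)$ has $j\le n-2$ rows and $a_{-i\ell}$ has $n-2$ positive letters. Hence $H$ is an entrywise positive off-diagonal matrix with zero diagonal.

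\emph{Base point.} At the constant alphabet $a=(1,\ldots,1)$ we have $H=c(J-I)$, where $J$ is the all-ones matrix and $c=K_j(1^{n-2})>0$. Its spectrum is $\{c(n-1),-c,\ldots,-c\}$, so it has exactly one positive eigenvalue and $n-1$ negative ones.

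\emph{Propagation.} The positive orthant $(0,+\infty)^n$ is connected and the eigenvalues of $H(a)$ depend continuously on $a$. The inertia is therefore constant along any path on which $\det H(a)\ne0$. So it suffices to prove that $H(a)$ is nonsingular for every positive alphabet with $n\ge2$ and $0\le j\le n-2$.

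\emph{Hyperplane vector.} Once the inertia is known, the Perron--Frobenius theorem applies: $H+tI$ is entrywise positive for $t>0$, so the unique positive eigenvalue has a positive eigenvector $u$. I take $b=u$. On $u^\perp$ the form $z^\top Hz$ is a combination of the remaining eigenvalues, which are all negative, so $z^\top Hz<0$ for every nonzero $z\in b^\perp$. This is exactly the hypothesis needed later in Lemma~\ref{lem:hyperplane-comparison}.

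\emph{Main obstacle: nonsingularity.} This is the step I expect to be hard. My first attempt is to make the structure of $H$ explicit by applying Lemma~\ref{lem:rect-deletion} twice, with $q=2$. This writes $K_j(a_{-i\ell})$ as a bivariate polynomial $\sum_{p,r}(-a_i)^p(-a_\ell)^r\,s_{(2^{j-p-r},1^{p+r})}$-type sum, whose coefficients are Schur functions of the full alphabet and are symmetric in $(p,r)$. This exhibits
\[
H=M^\top GM-\Delta,
\]
where $M$ is the $(j+1)$-row Vandermonde-type matrix with columns $(1,-a_i,\ldots,(-a_i)^j)$, $G$ is a Hankel-type Gram matrix of Schur functions, and $\Delta$ is the diagonal correction $\Delta_{ii}=\sum_{p,r}a_i^{p+r}(\cdots)$ needed because $H_{ii}=0$.

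The plan is then to show that $\Delta$ is positive definite. If that holds, the question reduces to whether $-\Delta+M^\top GM$ can acquire a zero eigenvalue. I would control this through the sign of a single test quadratic form, chosen as $e^\top He$ or $(He)^\top H^{-1}(He)$ for a natural vector $e$ such as $e_i\propto a_i^{2}$ or $e=\1_n$.

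When such test forms are expanded, the deletion sums $F_k=\sum_i a_i^2 s_{(4^k)}(a_{-i})$ should appear. I expect the relevant determinant or Schur complement to reduce to the scalar residues $\ResA_k$ and $\ResB_k$ of \eqref{eq:scalars}. Corollary~\ref{cor:reciprocity} then supplies $\ResB_k\ge0$ and $\ResA_k>0$, which should give the nonvanishing of the test form and hence of $\det H$.

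Fallback: if this direct route stalls, I would induct on $n$ via Cauchy interlacing. Letting one letter tend to $0$ sends the principal $(n-1)\times(n-1)$ block of $H$ to the corresponding matrix for the shorter alphabet. The strict positivity in \eqref{eq:strict} is what I would use to rule out an eigenvalue crossing zero in the bordered step.
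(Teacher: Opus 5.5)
Your reduction is valid, but it moves the difficulty rather than resolving it. The base point $c(J-I)$, the local constancy of inertia where $\det H(a)\ne0$, and the choice of $b$ as an eigenvector for the positive eigenvalue are all fine. However, nonsingularity of $H(a)$ on the whole orthant is essentially the full content of the lemma, and you give no argument for it. The route you sketch would not supply one.

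\begin{itemize}
\item[(i)] The intermediate claim $\Delta\succ0$ is false. With coefficients taken as symmetric functions of the full alphabet, as you propose, $\Delta_{ii}=(M^{\top}GM)_{ii}$ is the bivariate expansion evaluated at $a_\ell=a_i$, and it can be negative. For $n=3$ and $j=1$ one has $K_1=h_2$ and $h_2[a-x-y]=h_2(a)-(x+y)h_1(a)+xy$. Hence $\Delta_{ii}=h_2(a_{-i})-a_ih_1(a_{-i})$, which equals $3-20=-17$ at $a=(10,1,1)$, $i=1$.
\item[(ii)] Even granting $\Delta\succ0$, the sign of a single test form $e^{\top}He$ cannot exclude a zero eigenvalue in another direction. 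The form $(He)^{\top}H^{-1}(He)$ presupposes the very invertibility in question.
\item[(iii)] The fallback also stalls. Letting a letter tend to $0$ only controls $H$ near one face of the orthant. When $j=n-2$ the limiting principal block is identically zero, since $K_{n-2}$ vanishes on $n-3$ letters. And interlacing leaves exactly one eigenvalue of undetermined sign, which is the original problem.
\end{itemize}

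The missing idea is an explicit certificate of strict negativity on a hyperplane, valid at every $a$; this makes the homotopy unnecessary. For $j\ge1$, put $r=j+1$, $b_i=K_{r-1}(a_{-i})$ and $t_i=a_i\Srect_r(a_{-i})$. The paper proves that
\[
 R=bb^{\top}-K_{r-1}(a)H-\frac{e_r(a)}{\Srect_{r+1}(a)}tt^{\top}
\]
is positive definite, as follows.

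\begin{itemize}
\item[(a)] The two-deletion identity $K_j(x,A)K_j(y,A)-K_j(x,y,A)K_j(A)=xy\,e_{j+1}(x,y,A)\Srect_j(A)$, combined with a Desnanot--Jacobi condensation, gives $R_{i\ell}=-\frac{K_{r-1}(a)e_r(a)}{\Srect_{r+1}(a)}a_ia_\ell s_{(4^{r-1})}(a_{-i\ell})\le0$ for $i\ne\ell$.
\item[(b)] Adjacent-minor identities give the diagonal entries.
\item[(c)] $(Ra)_i$ is a positive multiple of $\ResA_{r-1}(a_{-i})+a_i\ResB_{r-1}(a_{-i})$, which is positive by Corollary~\ref{cor:reciprocity}.
\end{itemize}

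A symmetric matrix with nonpositive off-diagonal entries that maps the positive vector $a$ to a positive vector is positive definite. Hence $z^{\top}Hz<0$ for every nonzero $z\in b^{\perp}$. Together with $\1_n^{\top}H\1_n>0$ and Lemma~\ref{lem:hyperplane-comparison}, this gives the inertia directly at each $a$.

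You were right that $\ResA$ and $\ResB$ must enter. But they appear as the $a$-weighted row sums of this rank-two correction of $-K_{r-1}(a)H$, not as a determinant or Schur complement of $H$.
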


\begin{proof}
For $j=0$, every off-diagonal entry is one. Thus
$H=\1_n\1_n^{\top}-I$, whose eigenvalue on
$\operatorname{span}\{\1_n\}$ is $n-1$ and whose restriction
to $\1_n^\perp$ is $-I$.

For the rest of the proof, let $j\ge1$ and put $r=j+1$.
Consequently $2\le r\le n-1$. Define
\[
 b_i=K_{r-1}(a_{-i}),\qquad
 t_i=a_i\Srect_r(a_{-i}),\qquad
 R=bb^{\top}-K_{r-1}(a)H
       -\frac{e_r(a)}{\Srect_{r+1}(a)}tt^{\top}.
\]
Here $K_{r-1}(a)=s_{(2^{r-1})}(a)>0$,
$e_r(a)>0$, and
$\Srect_{r+1}(a)=s_{(3^r)}(a)>0$: in each case the partition
has at most $n$ rows. We shall prove that $R$ is positive definite.

\paragraph{The rectangular condensation identity.}
Let $A$ be a finite alphabet and let $x,y$ be two further letters.
Put $d=r-1\ge1$ and consider the four-by-four dual
Jacobi-Trudi matrix
\[
 T_{i\ell}=e_{d-i+\ell}(A),\qquad 1\le i,\ell\le4.
\]
By \eqref{eq:dual-jt}, $\det T=s_{(4^d)}(A)$.
For a letter $z$, set
\[
 U(z)=\begin{pmatrix}
 1&z&0&0\\0&1&z&0\\0&0&1&z\\0&0&0&1
 \end{pmatrix},\qquad
 \widetilde T=U(x)TU(y).
\]
Both triangular factors have determinant one, so
$\det\widetilde T=\det T$.
Left multiplication adds $x$ times the next original row;
right multiplication adds $y$ times the preceding original
column. The elementary identity
$e_q(z,A)=e_q(A)+ze_{q-1}(A)$, valid also for negative
indices under our zero convention, therefore gives
\begin{equation}\label{eq:transformed-jt}
 \widetilde T_{i\ell}=
 \begin{cases}
 e_{d-i+1}(x,A),&i<4,\ \ell=1,\\
 e_{d-i+\ell}(x,y,A),&i<4,\ \ell\ge2,\\
 e_{d-3}(A),&i=4,\ \ell=1,\\
 e_{d-4+\ell}(y,A),&i=4,\ \ell\ge2.
 \end{cases}
\end{equation}
Write $\widetilde T_{\widehat I,\widehat J}$ for the matrix
obtained by deleting the rows in $I$ and the columns in $J$.
The five minors needed for \eqref{eq:desnanot-jacobi} are
\begin{align*}
 \det\widetilde T_{\widehat4,\widehat4}
   &=s_{(3^d)}(x,A),&
 \det\widetilde T_{\widehat1,\widehat1}
   &=s_{(3^d)}(y,A),\\
 \det\widetilde T_{\widehat1,\widehat4}
   &=s_{(3^{d-1})}(A),&
 \det\widetilde T_{\widehat4,\widehat1}
   &=s_{(3^{d+1})}(x,y,A),\\
 \det\widetilde T_{\widehat{\{1,4\}},\widehat{\{1,4\}}}
   &=s_{(2^d)}(x,y,A).
\end{align*}
In the upper-left three-by-three minor, perform the row additions
first. All its rows then have entries $e_{d-i+\ell}(x,A)$;
the subsequent column additions are internal to this minor and
preserve its determinant. This gives its stated value by
\eqref{eq:dual-jt}. For the lower-right minor, perform the column
additions first: its columns have entries
$e_{d-i+\ell}(y,A)$, and the subsequent row additions are
internal. Subtracting one from both indices gives the second value.

In the lower-left minor, both sets of additions are internal.
Its determinant thus agrees with that of the corresponding minor
of $T$, whose entries after reindexing are
$e_{d-1-i+\ell}(A)$. The upper-right minor and the center
minor consist entirely of entries evaluated on $(x,y,A)$;
reindexing gives respectively
$e_{d+1-i+\ell}(x,y,A)$ and
$e_{d-i+\ell}(x,y,A)$. These are precisely the remaining
three dual Jacobi-Trudi determinants above. When $d=1$,
the lower-left determinant is triangular with diagonal entries
$e_0=1$, and equals $s_{(3^0)}=1$.

Applying \eqref{eq:desnanot-jacobi} to $\widetilde T$, then
using $d=r-1$ and \eqref{eq:K-S-definitions}, gives
\begin{equation}\label{eq:condensation}
 \Srect_r(x,A)\Srect_r(y,A)
 -\Srect_{r+1}(x,y,A)\Srect_{r-1}(A)
 =K_{r-1}(x,y,A)s_{(4^{r-1})}(A).
\end{equation}
\paragraph{The elementary two-deletion identity.}
We next prove, for $j\ge1$,
\begin{equation}\label{eq:two-deletion}
 K_j(x,A)K_j(y,A)-K_j(x,y,A)K_j(A)
 =xy\,e_{j+1}(x,y,A)\Srect_j(A).
\end{equation}
All elementary symmetric functions in the following calculation
are evaluated on $A$. Set
\[
 u=e_{j+1},\quad v=e_j,\quad z=e_{j-1},\quad
 d=e_{j-2},\quad f=e_{j-3},
\]
and define
\[
 c_0=v^2-uz,\qquad
 c_1=vz-ud,\qquad
 c_2=z^2-vd,
\]
\[
 q_0=2vd-z^2-uf,\qquad
 q_1=zd-vf,\qquad
 q_2=d^2-zf.
\]
In this calculation, $f$ and $c_i$ denote the quantities above. By \eqref{eq:dual-jt},
\begin{equation}\label{eq:three-square-expansion}
 T\coloneqq \Srect_j(A)
  =\det\begin{pmatrix}
  z&v&u\\ d&z&v\\ f&d&z
  \end{pmatrix}
  =z^3-2vzd+v^2f+ud^2-uzf.
\end{equation}
This includes $j=1$: the zero-index and negative-index
conventions make the displayed determinant equal to one.

Since $e_q(x,A)=e_q+xe_{q-1}$, direct substitution gives
\[
 K_j(x,A)=c_0+xc_1+x^2c_2.
\]
Writing $s=x+y$ and $p=xy$, the relation
$e_q(x,y,A)=e_q+se_{q-1}+pe_{q-2}$ similarly gives
\[
 K_j(x,y,A)=c_0+sc_1+s^2c_2+pq_0+spq_1+p^2q_2.
\]
On multiplying the two one-letter expressions, one obtains
\[
 K_j(x,A)K_j(y,A)
 =c_0^2+c_0c_1s+c_0c_2(s^2-2p)
   +c_1^2p+c_1c_2ps+c_2^2p^2.
\]
Subtracting $c_0K_j(x,y,A)$, with $K_j(A)=c_0$, leaves
\begin{equation}\label{eq:two-deletion-coefficients}
 p\left[
 c_1^2-c_0(2c_2+q_0)
 +s(c_1c_2-c_0q_1)+p(c_2^2-c_0q_2)
 \right].
\end{equation}
The three coefficients in brackets are, respectively,
$uT,vT,zT$. Indeed, $2c_2+q_0=z^2-uf$, and
\begin{align}
 (vz-ud)^2-(v^2-uz)(z^2-uf)
   &=u(z^3-2vzd+v^2f+ud^2-uzf)=uT,\nonumber\\
 (vz-ud)(z^2-vd)-(v^2-uz)(zd-vf)
   &=v(z^3-2vzd+v^2f+ud^2-uzf)=vT,\label{eq:minor-coefficients}\\
 (z^2-vd)^2-(v^2-uz)(d^2-zf)
   &=z(z^3-2vzd+v^2f+ud^2-uzf)=zT.\nonumber
\end{align}
Inserting these into \eqref{eq:two-deletion-coefficients} gives
$p(u+sv+pz)T$.
Since $u+sv+pz=e_{j+1}(x,y,A)$, this proves
\eqref{eq:two-deletion}.

\paragraph{The off-diagonal entries of $R$.}
Fix $i\ne\ell$, and put
$x=a_i$, $y=a_\ell$, and $A=a_{-i\ell}$.
The definition of $R$ and \eqref{eq:two-deletion},
with $j=r-1$, give
\begin{align*}
 R_{i\ell}
 &=K_{r-1}(y,A)K_{r-1}(x,A)
   -K_{r-1}(x,y,A)K_{r-1}(A)\\
 &\hspace{12mm}
   -\frac{e_r(x,y,A)}{\Srect_{r+1}(x,y,A)}
       xy\Srect_r(y,A)\Srect_r(x,A)\\
 &=xy e_r(x,y,A)\left[
   \Srect_{r-1}(A)
   -\frac{\Srect_r(x,A)\Srect_r(y,A)}
          {\Srect_{r+1}(x,y,A)}
   \right].
\end{align*}
Equation \eqref{eq:condensation} therefore yields
\begin{equation}\label{eq:off}
 R_{i\ell}
 =-\frac{K_{r-1}(a)e_r(a)}{\Srect_{r+1}(a)}
       a_i a_\ell s_{(4^{r-1})}(a_{-i\ell})\le0
       \qquad(i\ne\ell).
\end{equation}
The prefactors are positive, and the tableau expansion makes the
Schur value nonnegative.

\paragraph{The adjacent-minor identities.}
To compute the diagonal, write $a=(x,A)$ and abbreviate
\[
 c=K_{r-1}(A),\quad C=K_r(A),\quad
 T=\Srect_r(A),\quad U=\Srect_{r+1}(A),\quad
 f=K_{r-1}(x,A),\quad g=e_r(x,A).
\]
We shall establish
\begin{align}
 fU&=c\Srect_{r+1}(x,A)-xK_r(x,A)T,
       \label{eq:adjacent-first}\\
 cK_r(x,A)-fC&=xgT.
       \label{eq:adjacent-second}
\end{align}
Every $e_q$ in the following minors is evaluated on $A$.
Form the four row vectors
\[
 v_i=(e_{r-i},e_{r+1-i},e_{r+2-i}),
 \qquad i=0,1,2,3.
\]
For increasing indices, set
\[
 \Delta_{ijk}=\det\begin{pmatrix}v_i\\v_j\\v_k\end{pmatrix},
 \qquad
 L_{ij}=\det\begin{pmatrix}
 e_{r-i}&e_{r+1-i}\\
 e_{r-j}&e_{r+1-j}
 \end{pmatrix}.
\]
Dual Jacobi-Trudi identifies
\[
 U=\Delta_{012},\qquad T=\Delta_{123},\qquad
 c=L_{12},\qquad C=L_{01}.
\]

The alternating-minor identity
\begin{equation}\label{eq:four-row-relation}
 v_0\Delta_{123}-v_1\Delta_{023}
 +v_2\Delta_{013}-v_3\Delta_{012}=0
\end{equation}
follows by expanding a determinant with a repeated column.
For each $q\in\{1,2,3\}$, form the four-by-four matrix whose
row indexed by $i$ is
\[
 \bigl((v_i)_q,(v_i)_1,(v_i)_2,(v_i)_3\bigr),
 \qquad i=0,1,2,3.
\]
Its first column equals column $q+1$, so its determinant is zero.
Expanding along the first column gives the $q$-th coordinate of
\eqref{eq:four-row-relation}.

Let $w_i$ be the first two coordinates of $v_i$.
Apply the linear functional $w\mapsto\det(w,w_1)$ to the first
two coordinates of \eqref{eq:four-row-relation}. This gives
\[
 L_{01}\Delta_{123}-L_{12}\Delta_{013}
   +L_{13}\Delta_{012}=0.
\]
Using instead $w\mapsto\det(w,w_2)$ gives
\[
 L_{02}\Delta_{123}-L_{12}\Delta_{023}
   +L_{23}\Delta_{012}=0.
\]
Thus
\begin{equation}\label{eq:adjacent-minor-relations}
 \begin{split}
 L_{12}\Delta_{013}-L_{01}T&=L_{13}U,\\
 L_{12}\Delta_{023}-L_{02}T&=L_{23}U.
 \end{split}
\end{equation}

The one-letter relation for $e_q$ makes the three rows of
the determinant for $\Srect_{r+1}(x,A)$ equal to
$v_0+xv_1,v_1+xv_2,v_2+xv_3$.
Multilinearity of the determinant, and the vanishing of every
term with repeated rows, give
\begin{equation}\label{eq:one-letter-minors}
 \begin{split}
 \Srect_{r+1}(x,A)
   &=U+x\Delta_{013}+x^2\Delta_{023}+x^3T,\\
 K_{r-1}(x,A)&=L_{12}+xL_{13}+x^2L_{23},\\
 K_r(x,A)&=L_{01}+xL_{02}+x^2L_{12}.
 \end{split}
\end{equation}
For the two-by-two formulas the same multilinear expansion uses
the first two coordinates of, respectively,
$(v_1+xv_2,v_2+xv_3)$ and
$(v_0+xv_1,v_1+xv_2)$.
Consequently
\begin{align*}
 c\Srect_{r+1}(x,A)-xK_r(x,A)T
 &=cU+x(c\Delta_{013}-L_{01}T)
       +x^2(c\Delta_{023}-L_{02}T)\\
 &=(L_{12}+xL_{13}+x^2L_{23})U=fU,
\end{align*}
where the cubic terms have canceled and
\eqref{eq:adjacent-minor-relations} supplies the second line.
This proves \eqref{eq:adjacent-first}.

To prove \eqref{eq:adjacent-second}, the last two coefficient
identities in \eqref{eq:minor-coefficients}, applied with index
$j=r$ on $A$, give
\[
 L_{02}L_{12}-L_{13}L_{01}=e_r(A)T,\qquad
 L_{12}^2-L_{23}L_{01}=e_{r-1}(A)T.
\]
Indeed the identifications in that calculation are
\[
 L_{01}=v^2-uz,\quad L_{02}=vz-ud,\quad
 L_{12}=z^2-vd,\quad L_{13}=zd-vf,\quad L_{23}=d^2-zf,
\]
with $(u,v,z,d,f)=(e_{r+1},e_r,e_{r-1},e_{r-2},e_{r-3})$.
Expanding with \eqref{eq:one-letter-minors} now gives
\begin{align*}
 cK_r(x,A)-fC
 &=x(L_{12}L_{02}-L_{13}L_{01})
     +x^2(L_{12}^2-L_{23}L_{01})\\
 &=x\bigl(e_r(A)+xe_{r-1}(A)\bigr)T
  =xgT.
\end{align*}
This proves the second adjacent identity, including the cases
where some of the displayed elementary indices are negative.

Multiplying \eqref{eq:adjacent-first} by $c$ and substituting
$cK_r(x,A)=fC+xgT$ from \eqref{eq:adjacent-second}, we find
\[
 cfU=c^2\Srect_{r+1}(x,A)-x(fC+xgT)T.
\]
Equivalently,
\[
 c^2\Srect_{r+1}(x,A)-x^2gT^2=f(cU+xCT).
\]
Since $H_{ii}=0$, the defining diagonal entry of $R$ is
$c^2-x^2gT^2/\Srect_{r+1}(x,A)$. Dividing the last identity
by the positive denominator therefore proves
\begin{equation}\label{eq:diag}
 R_{ii}
 =\frac{K_{r-1}(a)}{\Srect_{r+1}(a)}
 \left[
 K_{r-1}(a_{-i})\Srect_{r+1}(a_{-i})
 +a_i\Srect_r(a_{-i})K_r(a_{-i})
 \right].
\end{equation}

\paragraph{A positive vector and a sum of squares.}
Fix $i$, write $A=a_{-i}$, and put $k=r-1$.
By the definition of $F_k$,
\[
 F_k(A)=\sum_{\ell\ne i}
       a_\ell^2s_{(4^k)}(a_{-i\ell}).
\]
Multiplying \eqref{eq:diag} by $a_i$, and multiplying each
off-diagonal entry in row $i$ of \eqref{eq:off} by its
corresponding $a_\ell$, yields
\begin{align*}
 (Ra)_i
 &=\frac{K_k(a)a_i}{\Srect_{r+1}(a)}
 \left[
 K_k(A)\Srect_{r+1}(A)
 +a_i\Srect_r(A)K_r(A)-e_r(a)F_k(A)
 \right]\\
 &=\frac{K_k(a)a_i}{\Srect_{r+1}(a)}
 \left[
 K_k(A)\Srect_{r+1}(A)-e_r(A)F_k(A)
 +a_i\bigl(\Srect_r(A)K_r(A)-e_k(A)F_k(A)\bigr)
 \right]\\
 &=\frac{K_{r-1}(a)a_i}{\Srect_{r+1}(a)}
       \left[\ResA_{r-1}(a_{-i})
                  +a_i\ResB_{r-1}(a_{-i})\right].
\end{align*}
The second line uses $e_r(a)=e_r(A)+a_i e_{r-1}(A)$;
the third uses the scalar definitions \eqref{eq:scalars}.
Here $k=r-1\ge1$, and $A$ has $N=n-1\ge r=k+1$
positive letters. Corollary~\ref{cor:reciprocity} therefore gives
$\ResA_k(A)>0$ and $\ResB_k(A)\ge0$. Every prefactor is
positive, so
\begin{equation}\label{eq:R-positive-vector}
 (Ra)_i>0\qquad\text{for every }i.
\end{equation}

For any real vector $z$, consider
\[
 \sum_{i<\ell}(-R_{i\ell})a_i a_\ell
 \left(\frac{z_i}{a_i}-\frac{z_\ell}{a_\ell}\right)^2
 +\sum_i\frac{(Ra)_i}{a_i}z_i^2.
\]
The coefficient of $z_i z_\ell$, for $i<\ell$, in this
expression is $2R_{i\ell}$. Its coefficient of $z_i^2$ is
\[
 \sum_{\ell\ne i}(-R_{i\ell})\frac{a_\ell}{a_i}
       +\frac{(Ra)_i}{a_i}
 =-\sum_{\ell\ne i}R_{i\ell}\frac{a_\ell}{a_i}
       +R_{ii}+\sum_{\ell\ne i}R_{i\ell}\frac{a_\ell}{a_i}
 =R_{ii}.
\]
Thus the expression equals $z^{\top}Rz$, proving
\begin{equation}\label{eq:R-squares}
 z^{\top}Rz
 =\sum_{i<\ell}(-R_{i\ell})a_i a_\ell
 \left(\frac{z_i}{a_i}-\frac{z_\ell}{a_\ell}\right)^2
 +\sum_i\frac{(Ra)_i}{a_i}z_i^2.
\end{equation}
Each term is nonnegative by \eqref{eq:off} and
\eqref{eq:R-positive-vector}. If $z\ne0$, some $z_i^2>0$,
and its term in the second sum is strictly positive.
Therefore $R$ is positive definite.

Finally, for a nonzero $z\in b^\perp$, the definition of $R$
gives
\[
 K_{r-1}(a)z^{\top}Hz
 =-\frac{e_r(a)}{\Srect_{r+1}(a)}(t^{\top}z)^2
       -z^{\top}Rz<0.
\]
Thus $H$ is negative definite on $b^\perp$.
Also,
\[
 \1_n^{\top}H\1_n
  =2\sum_{i<\ell}K_j(a_{-i\ell})>0,
\]
because $K_j=s_{(2^j)}$, $j\le n-2$, and every deleted
alphabet has $n-2$ positive letters. By the spectral theorem,
$H$ therefore has a unit eigenvector $p$ with positive
eigenvalue. In particular $p^{\top}Hp>0$.
Apply the strict conclusion of Lemma~\ref{lem:hyperplane-comparison}
with $C=H$ and $e=p$. Every other vector $q$ in an
orthonormal eigenbasis satisfies
$p^{\top}Hq=\lambda_q p^{\top}q=0$, hence
$\lambda_q=q^{\top}Hq<0$. This proves the assertion,
including multiplicities.
\end{proof}

\section{Covariance bound and sharpness}\label{sec:covariance-proof}

\begin{proof}[Proof of Theorem~\ref{thm:main}]
Write $Z=e_m(w)>0$. Summing the probabilities of all
$m$-subsets containing one or two specified indices gives
\[
 \mathbb E X_i=\frac{w_i e_{m-1}(w_{-i})}{Z},\qquad
 \mathbb E(X_iX_\ell)=
       \frac{w_iw_\ell e_{m-2}(w_{-i\ell})}{Z}
       \quad(i\ne\ell).
\]
For $m=1$, the second formula is zero by the convention
$e_{-1}=0$, as required.

To simplify the covariance numerator, fix $i\ne\ell$ and set
\[
 x=w_i,\quad y=w_\ell,\quad A=w_{-i\ell},\qquad
 E=e_{m-1}(A),\quad F=e_{m-2}(A),\quad G=e_m(A).
\]
Then
\[
 e_{m-1}(w_{-i})=E+yF,\qquad
 e_{m-1}(w_{-\ell})=E+xF,\qquad
 Z=G+(x+y)E+xyF.
\]
Consequently
\begin{align*}
 -\Sigma_{i\ell}
 &=\frac{xy}{Z^2}
   \left[(E+yF)(E+xF)-F\bigl(G+(x+y)E+xyF\bigr)\right]\\
 &=\frac{xy}{Z^2}(E^2-FG)
  =\frac{w_iw_\ell}{e_m(w)^2}K_{m-1}(w_{-i\ell})>0.
\end{align*}
The last inequality follows from $m-1\le d-2$ and
the strict positivity of $s_{(2^{m-1})}$ on the
$d-2$ positive letters of $w_{-i\ell}$.

Since $\sum_iX_i=m$ with probability one, for each $i$ we have
\[
 (\Sigma\1_d)_i
 =\operatorname{Cov}\left(X_i,\sum_\ell X_\ell\right)
 =\operatorname{Cov}(X_i,m)=0.
\]
Put $c_{i\ell}=-\Sigma_{i\ell}>0$ for $i\ne\ell$.
The row-sum identity yields
\[
 v_i=\Sigma_{ii}=\sum_{\ell\ne i}c_{i\ell}>0.
\]
In particular $D^{-1/2}$ is defined and $V=\sum_i v_i>0$.
For later use, expansion of the quadratic form gives
\begin{equation}\label{eq:covariance-laplacian}
 z^{\top}\Sigma z
 =\sum_{i<\ell}c_{i\ell}(z_i-z_\ell)^2.
\end{equation}
All $c_{i\ell}$ are strictly positive, so this vanishes exactly
when all coordinates of $z$ are equal. Hence
$\ker\Sigma=\operatorname{span}\{\1_d\}$.

Set $C=D-\Sigma$, and let $H$ be the matrix from
Lemma~\ref{lem:inertia} evaluated at $a=w$ and $j=m-1$.
Its permitted range holds because $1\le m\le d-1$.
The diagonal of $C$ is zero and its off-diagonal entries are
$c_{i\ell}$, so the covariance formula above gives
\begin{equation}\label{eq:covariance-congruence}
 C=Z^{-2}\diag(w)H\diag(w),\qquad C\1_d=v.
\end{equation}
Let $b$ be the hyperplane normal supplied by
Lemma~\ref{lem:inertia}, and set $\widetilde b=\diag(w)b$.
If $\widetilde b^{\top}z=0$, then
$b^{\top}\diag(w)z=0$, and \eqref{eq:covariance-congruence}
gives
\[
 z^{\top}Cz
 =Z^{-2}(\diag(w)z)^{\top}H(\diag(w)z)\le0.
\]
Since $C\1_d=v$ and $\1_d^{\top}C\1_d=V>0$,
Lemma~\ref{lem:hyperplane-comparison}, with $e=\1_d$, yields
$C\preceq vv^{\top}/V$. Substituting $C=D-\Sigma$ gives
\[
 \Sigma\succeq D-\frac{vv^{\top}}{V}.
\]
For the spectral formulation, put
$N=D^{-1/2}\Sigma D^{-1/2}$ and
$u=(\sqrt{v_1},\ldots,\sqrt{v_d})^{\top}$, so
$\|u\|^2=V$.
The kernel calculation following \eqref{eq:covariance-laplacian}
shows that $\ker N=\operatorname{span}\{u\}$.
The displayed matrix inequality is equivalent by congruence to
\[
 N\succeq I-\frac{uu^{\top}}{V}.
\]
The right-hand side is zero on $\operatorname{span}\{u\}$
and is the identity on $u^\perp$.
Both subspaces are invariant under the symmetric matrix $N$.
Thus this inequality is equivalent to every eigenvalue of
$N$ on $u^\perp$, namely every nonzero eigenvalue, being at
least one. The cases $m=1$ and $d=2$ correspond to
$j=m-1=0$ in Lemma~\ref{lem:inertia}.

It remains to show optimality. Take $d=3,m=1$ and choose weights
equal to the probabilities
\[
 p_1=\varepsilon,\qquad
 p_2=p_3=q=\frac{1-\varepsilon}{2},\qquad
 0<\varepsilon<1.
\]
These positive weights sum to one, so they realize the stated
probabilities in the product-weight model.
Here $\Sigma=\diag(p)-pp^{\top}$, with
$v_i=p_i(1-p_i)$.
The normalized covariance $N$ therefore has diagonal entries
one and off-diagonal entries
$-p_i p_\ell/\sqrt{v_i v_\ell}$.
Because $p_2=p_3$ and $v_2=v_3$, the vector
$(0,1,-1)^{\top}$ is an eigenvector:
its first coordinate after multiplication is zero, and its last
two coordinates are multiplied by
\[
 1+\frac{q^2}{q(1-q)}
 =\frac1{1-q}=\frac{2}{1+\varepsilon}.
\]
There is also the zero eigenvalue, with eigenvector $\sqrt v$.
The trace is three, since every diagonal entry of $N$ is one,
so its remaining eigenvalue is
\[
 3-\frac{2}{1+\varepsilon}
 =\frac{1+3\varepsilon}{1+\varepsilon}.
\]
As $\varepsilon\downarrow0$, this last nonzero eigenvalue tends
to one. For every proposed universal coefficient strictly larger
than one, sufficiently small positive $\varepsilon$ therefore
violates that coefficient. The coefficient one is optimal.
\end{proof}

\paragraph{Strictness for positive weights.}
Put
\[
 N=D^{-1/2}\Sigma D^{-1/2},\qquad
 P=D-\frac{vv^{\top}}V,\qquad
 u=(\sqrt{v_1},\ldots,\sqrt{v_d})^{\top},\qquad M=I-N.
\]
By \eqref{eq:covariance-congruence}, $M$ is congruent to the
matrix $H$ of Lemma~\ref{lem:inertia} with $j=m-1$.
It therefore has one positive eigenvalue and $d-1$ negative
eigenvalues. Since $\Sigma\1_d=0$, we have $Mu=u$, so its
positive eigenvalue is one. Consequently every nonzero eigenvalue
of $N$ is strictly greater than one.
Also $\operatorname{tr}N=d$, because all its diagonal entries
are one. For $d\ge3$, the other $d-2$ nonzero eigenvalues
have sum greater than $d-2$, so each nonzero eigenvalue is
strictly less than two. Thus
\[
 \operatorname{spec}(N)\subseteq\{0\}\cup(1,2)\quad(d\ge3),
 \qquad \operatorname{spec}(N)=\{0,2\}\quad(d=2).
\]
The sharpness family above approaches the lower endpoint one.

Moreover,
\[
 D^{-1/2}(\Sigma-P)D^{-1/2}
 =N-I+\frac{uu^{\top}}V
\]
vanishes on $\operatorname{span}\{u\}$ and is positive definite
on $u^\perp$. Since $D^{-1/2}u=\1_d$, this gives
\[
 \ker(\Sigma-P)=\operatorname{span}\{\1_d\}.
\]
\section{Corollaries}\label{sec:corollaries}

\subsection{Pseudoinverse bound}

For an eigenvector $u$ of $\Sigma$ with eigenvalue $\lambda\ge0$,
\[
    \Sigma^\dagger u
=
    \begin{cases}
        \lambda^{-1}u,  & \lambda>0,\\
        0,              & \lambda=0.
    \end{cases}
\]

The pseudoinverse bound of \citet[Lemma~2]{bacchiocchicesaricolomboni2026} applies to the same
positive weighted $m$-set laws and has coefficient $2$.
Corollary~\ref{cor:inverse} improves that coefficient to $1$.

\begin{proof}[Proof of Corollary~\ref{cor:inverse}]
Put $P=D-vv^{\top}/V$.
Weighted Cauchy-Schwarz gives $P\succeq0$, with
$\ker P=\operatorname{span}\{\1_d\}$, and
Theorem~\ref{thm:main} gives $\Sigma-P\succeq0$.
The kernel computation following \eqref{eq:covariance-laplacian}
and symmetry give
\[
 \ker\Sigma=\operatorname{span}\{\1_d\},\qquad
 \operatorname{im}\Sigma=\1_d^\perp.
\]
For $y\in\1_d^\perp$, set $h=\Sigma^\dagger y$ and
$z=D^{-1}y$. Then $\Sigma h=y$, and
$\1_d^{\top}y=0$ gives $Pz=y$.
Expanding the right-hand side yields
\[
 y^{\top}D^{-1}y-y^{\top}\Sigma^\dagger y
 =(z-h)^{\top}P(z-h)
   +h^{\top}(\Sigma-P)h\ge0,
\]
which proves \eqref{eq:inverse-corollary}.
If $y\ne0$, then $h\in\1_d^\perp\setminus\{0\}$.
The preceding strictness argument gives
$\ker(\Sigma-P)=\operatorname{span}\{\1_d\}$, so the last
term is positive and \eqref{eq:inverse-corollary} is strict.

For optimality, put $E=\1_d^\perp$. The restrictions
$\Sigma_E$ and $P_E$ to $E$ are positive definite.
For $y\in E$, the identities $PD^{-1}y=y=PP^\dagger y$
show that $D^{-1}y-P^\dagger y\in\ker P$. Hence
\[
 y^{\top}P^\dagger y=y^{\top}D^{-1}y.
\]
A coefficient $c\le0$ is impossible because
$y^{\top}\Sigma^\dagger y>0$ for nonzero $y\in E$.
If some $0<c<1$ satisfied the bound for every admissible law,
the preceding identity would give
$\Sigma_E^{-1}\preceq cP_E^{-1}$.
Inversion reverses the order of positive definite matrices, so
$\Sigma_E\succeq c^{-1}P_E$.
Since $\Sigma\1_d=P\1_d=0$, we obtain
$\Sigma\succeq c^{-1}P$ on the whole space for every such law.
Since $c^{-1}>1$, this contradicts the sharpness of
Theorem~\ref{thm:main}.
\end{proof}

\subsection{Effective resistance}\label{sec:resistance-details}

Taking $y=\mathbf e_i-\mathbf e_j$ in Corollary~\ref{cor:inverse} recovers the
effective-resistance theorem proved in the first version of this paper
\citep[Theorem~1.1]{cesaricolomboni2026}, with the same right-hand side.

By \eqref{eq:covariance-laplacian}, $\Sigma$ is the Laplacian
of the complete graph on $[d]$ with conductances
$c_{ij}\coloneqq -\Sigma_{ij}>0$ for $i\ne j$.
The left-hand side of \eqref{eq:resistance-corollary} is therefore
the effective resistance between vertices $i$ and $j$.

\begin{proof}[Proof of Corollary~\ref{cor:resistance}]
Apply Corollary~\ref{cor:inverse} with $y=\mathbf e_i-\mathbf e_j$.
\end{proof}

\paragraph{Electrical interpretation.}
For $i\ne j$, the vector $\mathbf e_i-\mathbf e_j$ belongs to $\1_d^\perp$.
Since $\Sigma$ is symmetric and $\ker\Sigma=\Span\{\1_d\}$, we have $\operatorname{im}\Sigma=\1_d^\perp$.
Hence the equation
\[
    \Sigma h
=
    \mathbf e_i-\mathbf e_j
\]
has a unique solution $h\in\1_d^\perp$, namely $h=\Sigma^\dagger(\mathbf e_i-\mathbf e_j)$.
By the definition of $R_{ij}$,
\[
 R_{ij}=(\mathbf e_i-\mathbf e_j)^\top h=h_i-h_j.
\]
Interpret $c_{ab}$ as the conductance of the edge $\{a,b\}$, so that the resistance of this edge is $1/c_{ab}$.
For a potential vector $h$, the $k$th coordinate of $\Sigma h$ is
$
    (\Sigma h)_k
=
    \sum_{\ell\ne k} c_{k\ell}(h_k-h_\ell),
$
which is the net current injected at vertex $k$.
Therefore, the equation
$
  \Sigma h=\mathbf e_i-\mathbf e_j
$
means that one unit of current is injected at vertex $i$, one unit is extracted at vertex $j$, and the net current at every other vertex is zero.
Since adding the same constant to all coordinates of $h$ does not change any voltage difference, this equation determines $h$ only up to an additive constant.
Requiring $h\in\1_d^\perp$, or equivalently $\1_d^\top h=0$, selects the unique solution whose coordinates have mean zero.
The voltage difference between vertices $i$ and $j$ is
$
    h_i-h_j.
$
By definition, the effective resistance between two terminals is the voltage difference required to send one unit of current from one terminal to the other.
Since the equation
$
    \Sigma h=\mathbf e_i-\mathbf e_j
$
describes one unit of current injected at $i$ and extracted at $j$, we have
$
    R_{ij}=h_i-h_j.
$

\paragraph{Graph-theoretic interpretation and scope of the bound.}
The scale $1/v_i$ already appears in the simplest graph reductions.
If all vertices other than $i$ are identified, the resulting two-vertex graph has a single edge of total weight $\sum_{j\ne i}c_{ij}=v_i$.
Evaluating the definition of effective resistance on this graph gives
\[
    \left(\sum_{j\ne i}c_{ij}\right)^{-1}=\frac1{v_i}.
\]
For the two-edge path $i-k-j$ with edge weights $a$ and $b$, a direct calculation from the Laplacian gives
\[
    R_{ij}
=
    \frac1a+\frac1b
=
    \frac1{v_i}+\frac1{v_j}.
\]
Thus, \eqref{eq:resistance-corollary} is a one-sided extension of this exact identity.
The resulting upper bound is nevertheless false for general weighted graphs.
For example, fix $d\ge4$, put edge weight $1$ on the edges of the path $1-2-\cdots-d$, and put edge weight $\varepsilon$ on every remaining edge.
As $\varepsilon\downarrow0$, the effective resistance between vertices $1$ and $d$ converges to $d-1$, whereas
\[
    v_1
=
    v_d
=
    1+(d-2)\varepsilon,
\qquad
    v_1^{-1}+v_d^{-1} \to 2.
\]
Therefore, the bound depends on special characteristics of the fixed-rank covariance Laplacians, and not just the completeness of the given weighted graph.
This extra structure appears for example looking at the relative probability of exchanging $j$ for $i$, which turns out to be independent of the other selected elements: for every $T\subseteq[d]\setminus\{i,j\}$ with $|T|=m-1$,
\[
    \frac{\mathbb P(\mathsf S=T\cup\{i\})}{\mathbb P(\mathsf S=T\cup\{j\})}
=
    \frac{w_i}{w_j}.
\]
This identity shows that, after fixing a background set $T$, the relative likelihood of selecting $i$ rather than $j$ is always $w_i/w_j$.
Thus the choice between $i$ and $j$ can be analyzed separately from the randomness of the considered background set.
For comparison, \citet[Theorems~3-4 and Proposition~5]{vonLuxburgRadlHein2014} prove that, for several classes of large random neighborhood graphs, effective resistance is asymptotic to the sum of the inverse endpoint degrees.
We remark that their results require specific quantitative connectivity conditions.
On the other hand, \eqref{eq:resistance-corollary} gives an exact bound for every dimension, rank, and positive external field.

\paragraph{Asymptotic sharpness.}

In this paragraph, write $p=m/d$ and $v=p(1-p)$ for the common coordinate variance.
Then, for the uniform rank-$m$ law on $[d]$, we have that
\[
    \Sigma
=
    \frac{d}{d-1}v\left(I-\frac1d\1_d\1_d^\top\right),
\qquad
    R_{ij}
=
    \frac{2(d-1)}{dv}.
\]
The right-hand side of \eqref{eq:resistance-corollary} is $2/v$, so
\[
    \frac{R_{ij}}{v_i^{-1}+v_j^{-1}}
=
    \frac{d-1}{d} \to 1.
\]
Thus, the coefficient $1$ in \eqref{eq:resistance-corollary} is, at least asymptotically, the best possible.

\subsection{Correlated sampling}

\begin{proof}[Proof of Corollaries~\ref{cor:rounding} and~\ref{cor:rounding-atmost}]
We verify Conjecture~3 of \citet{anarihaqima2026}, version~2.
Let $C$ be the covariance of a positive external-field
$r$-subset law on $R$, indexed in increasing order, with $0<r<|R|$, and put
$D_C=\diag(C_{ii}:i\in R)$.
The calculation leading to \eqref{eq:covariance-laplacian} gives
$C\1_{|R|}=0$, $C_{ij}<0$ for $i\ne j$,
$C_{ii}>0$, and $\ker C=\operatorname{span}\{\1_{|R|}\}$.
Moreover, the same Laplacian identity gives
\[
 z^{\top}Cz
 =\sum_{i<j}(-C_{ij})(z_i-z_j)^2
 \le2\sum_i C_{ii}z_i^2,
\]
so $C\preceq2D_C$.
Theorem~\ref{thm:main} therefore implies that
$D_C^{-1/2}CD_C^{-1/2}$ has a simple zero eigenvalue
and all its other eigenvalues in $[1,2]$, exactly as required.
The nonterminal conditional laws in the sequential sampler remain
positive external-field laws, by the proof of
Lemma~4 of \citet{anarihaqima2026}.
Theorem~1 and Corollary~2 of \citet{anarihaqima2026} now apply
without their conjectured hypothesis and give the marginal and
stretch guarantees, including boundary inputs.
For $k=0$, both outputs are empty; the exact case $k=n$
is deterministic, while the at-most case is included in the cited
Corollary~2.

For $0<k<n$, the exact sampler on the relative interior has law
$\mu_x$ by \citet[Lemma~4]{anarihaqima2026}. Its joint Borel
measurability and the radial continuity of $\mu_x$ in total
variation are proved in Appendix~D of the first version of this
paper \citep{cesaricolomboni2026}.
The boundary-completion argument in \citet[Appendix~E.1]{cesaricolomboni2026}
preserves any finite interior stretch constant: summability along
the radial sequence gives an almost-sure eventual value for each
fixed input, and bounded convergence preserves the law and the
stretch bound for each fixed pair of inputs.
Applying that argument with constant $6$ gives a jointly Borel
exact scheme on the closed hypersimplex with law $\mu_x$ and
stretch at most $6$.

For the additional cardinality assertion, choose the at-most scheme
using the monotone-prefix lift in the proof of
\citet[Corollary~2, Section~7]{anarihaqima2026}: append $k$ dummy
coordinates with total marginal $k-\|x\|_1$, all in $\{0,1\}$
except possibly one, apply the jointly Borel exact scheme just
constructed, and discard the dummies. The lift is continuous and
has $\ell_1$-Lipschitz constant at most $2$, so this gives a
jointly Borel scheme with stretch at most $12$.
For each fixed $x$, the dummy coordinates of marginal zero or one
are respectively excluded or included almost surely, so the remaining
cardinality belongs to
$\{\lfloor\|x\|_1\rfloor,\lceil\|x\|_1\rceil\}$ almost surely.
As in the proof of Corollary~1.5 of the first version of this paper
\citep{cesaricolomboni2026}, whenever this cardinality condition fails,
replace the output by $\{1,\ldots,\lfloor\|x\|_1\rfloor\}$,
with the empty-set convention when $\lfloor\|x\|_1\rfloor=0$.
The failure set and the replacement map are Borel, so this
correction preserves joint Borel measurability.
This enforces the condition for every input and every seed; for each
fixed input, and simultaneously for each fixed pair of inputs,
the correction occurs only on a null event, so all marginals and
the stretch bound remain unchanged.
\end{proof}

\begingroup
\interlinepenalty=10000
\bibliographystyle{plainnat}
\bibliography{biblio}
\endgroup
\end{document}